\documentclass{amsart}
\usepackage{amsfonts}
\usepackage{amsfonts,amssymb,amsmath,amsthm}
\usepackage{url}
\usepackage{enumerate}\usepackage{geometry}

\newtheorem{thm}{Theorem}[section]
\newtheorem{cor}[thm]{Corollary}

\newtheorem{lem}[thm]{Lemma}

\newtheorem{rem}[thm]{Remark}

\newcommand{\be}{\begin{equation}}
\newcommand{\ee}{\end{equation}}
\newcommand{\ben}{\begin{enumerate}}
\newcommand{\een}{\end{enumerate}}
\newcommand{\beq}{\begin{eqnarray}}
\newcommand{\eeq}{\end{eqnarray}}
\newcommand{\beqn}{\begin{eqnarray*}}
\newcommand{\eeqn}{\end{eqnarray*}}
\newcommand{\e}{\varepsilon}
\newcommand{\pa}{\partial}

\newcommand{\BK}{{\rm \bf K}}
\newcommand{\ms}{{\mathfrak{s}}}

\newcommand{\tF}{\tilde{F}}

\newcommand{\tx}{\tilde{x}}

\newcommand{\pzri}{{\pa \over \pa z^\gamma_i}}
\newcommand{\pzej}{{\pa \over \pa z^\eta_j}}

\newcommand{\tW}{{\tilde{W}}}

\newcommand{\tX}{{\tilde{X}}}

\newcommand{\la}{{\langle}}
\newcommand{\ra}{{\rangle}}

\newcommand{\tth}{\tilde{h}}


\begin{document}

\title[Nontrivial minimal surfaces in a hyperbolic Randers space]{\bf  Nontrivial minimal surfaces in a hyperbolic Randers space}

\author{Ningwei Cui$^1$,  Yi-Bing Shen$^2$ }
\footnotetext{\textit{Mathematics subject classification:}
53B40, 53C60.} \footnotetext{\textit{Key words  and phrases:} Finsler
geometry, Finsler metrics,  Randers space,
minimal surface, hyperbolic space.}
 \footnotetext{1. Supported by NSFC (No. 11401490) and the Fundamental Research Funds for the Central Universities (No. 2682014CX051) in China.
 
 2. Supported by NSFC (No. 11471246).}
\date{}

\address{address:
School of Mathematics, Southwest Jiaotong
 University, Chengdu, 610031, P.R.China.}
 \email{ningweicui@gmail.com}

\address{address: Department of Mathematics\\
Zhejiang University\\
Hangzhou, 310027\\
P.R.China}
 \email{yibingshen$@$zju.edu.cn}

\maketitle
\begin{abstract}
{The contribution of this paper is two-fold. The first one is to derive a simple formula of the mean curvature form for a hypersurface in the Randers space with a Killing  field, by considering the Busemann-Hausdorff measure and Holmes-Thompson measure simultaneously. The second one is to obtain the explicit local expressions of two types of nontrivial rotational  BH-minimal surfaces in a Randers domain of constant flag curvature $\BK=-1$, which are the first examples of BH-minimal surfaces in the hyperbolic Randers space. }
\end{abstract}

\section{Introduction}

The Randers manifold plays a fundamental role in the Finsler geometry.
The simply connected Randers manifolds of constant flag curvature are called the {\it  Randers space forms}, which were classified by using the Zermelo's navigation method in \cite{BRS}.  
The minimal surfaces in the  flat Randers space forms (\cite{CuiShen1, CuiShen2, RST, SS, ST,Wu}) and the positively curved Randers space forms (\cite{Cui1,Cui2})  have been studied in recent years.  
It is a common sense  that finding any explicit minimal surface is an interesting work  in Riemannian geometry. However, due to the complexity of the Finsler geometry, there is no known minimal surfaces in any negatively curved Finsler space up to the authors' best knowledge.
The negatively curved Randers space forms will be called {\em hyperbolic Randers space forms}  in this paper, and we {\em successfully} find explicit nontrivial minimal surfaces under the Busemann-Hausdorff measure in the hyperbolic Randers space form.

The author in \cite{Cui1} gave a formula of the mean curvature form in the Randers space by using the navigation data $(\tth,\tW)$ in the case that  $\tW$ is a Killing  field of constant length. However, according to the classification of  Randers space forms in \cite{BRS}, the hyperbolic Randers space forms  are modeled on the nonpositvely curved Riemannian space forms with conformal (Killing) vector fields.
In view of this, the formula in \cite{Cui1} in general
can not be used to study the hypersurfaces in the hyperbolic Randers space forms, because of the nonexistence  of nontrivial Killing fields of constant length in the Riemannian manifold of negative Ricci curvature.

By a straightforward computation, this paper will show that even in the case that $\tW$ is Killing only, the mean curvature formulas in the Randers space under both the Busemann-Hausdorff measure and Holmes-Thompson measure are extremely  simple,
 which generalize the formulas in \cite{Cui1}. Also surprisingly, under  the Busemann-Hausdorff measure, the formula enables us  to obtain the explicit local expressions of two types of nontrivial BH-minimal surfaces (spherical and hyperbolic types) 
 in an open domain of a hyperbolic Randers space form with flag curvature $\BK=-1$.

This paper is organized as follows. In Section 2, we give the formula of mean curvature form of a submanifold immersed in a Randers manifold by using Zermelo's navigation method (Theorem \ref{Killingmean}). This formula is  applicable for  both the Busemann-Hausdorff measure and the Holmes-Thompson measure, which has a very simple form when considering the hypersurface case and $\tW$ is Killing (Theorem \ref{geomean}). In Section 3, we consider an open domain of a hyperbolic Randers space form ($\BK=-1$) with a Killing field of mixed $S$ and $J$ type (\cite{BRS}), and then we obtain the explicit local expressions of nontrivial BH-minimal surfaces of spherical and hyperbolic types in  this domain (Theorem \ref{BHM}).

\section{Mean curvature of submanifolds by the method of navigation}

In this section, we derive the formula of mean curvature for an $n$-dimensional
submanifold isometrically immersed in an $(n+p)$-dimensional Randers manifold
$(\tilde{M},\tilde{F})$ with \emph{navigation data} $(\tth,\tW)$, where  $(\tth,\tW)$
consists of a  Riemannian metric 
$\tth=\sqrt{\tth_{\alpha\beta}d\tilde{x}^\alpha d\tilde{x}^\beta}$
and a vector field
 $\tW=\tW^\alpha{\partial\over\partial
\tilde{x}^\alpha}$  satisfying
$\|\tW_{\tilde{x}}\|_{\tth}<1$ at any point $\tilde{x}\in
\tilde{M}$. For the theory of Zermelo's navigation representation of Randers metrics, we refer to \cite{BRS}. In this paper, we
shall use the following convention of index ranges:
\[1\leq i,j,\cdot\cdot\cdot\leq n; \ \ \ 1\leq
\alpha,\beta,\cdot\cdot\cdot\leq n+p.\] Einstein summation
convention is also used throughout this paper.

 For an isometric immersion  $f:M^n\rightarrow
(\tilde{M}^{n+p},\tilde{F}=\tilde{\alpha}+\tilde{\beta})$,  the
induced metric $F=\alpha+\beta:=f^*\tilde{\alpha}+f^*\tilde{\beta}$
on $M$ is also a Randers metric. In local coordinates,  $f:M^n\rightarrow
(\tilde{M}^{n+p},\tilde{F})$ can be written as
\be\tilde{x}^{\alpha}=f^{\alpha}(x^1,\cdot\cdot\cdot,x^n).\label{graph}\ee

For a Finsler metric, there are two classical induced volume forms called Busemann-Hausdorff volume form and Holmes-Thompson volume form. At each point $x\in M$,  the volume form $dV_F$ of the induced
metric $F$ with respect to these two  volume forms can be written in an
uniform way \be {dV_F\mid}_x=\mathcal
{F}(\tilde{x},z)dx^1\wedge\cdot\cdot\cdot\wedge dx^n,\label{VVV}\ee
where  $\tilde{x}=f(x)\in \tilde{M}$ and $z=(z^\alpha_i)=({\pa
f^\alpha\over \pa x^i})$. The mean curvature form $\mathcal {H}_f=\mathcal {H}_\gamma d\tilde{x}^\gamma$ comes from the variation of volume functional of the
submanifolds with induced metrics (\cite{Shen}), precisely,
\be \mathcal {H}_\gamma={1\over \mathcal
{F}}\Big\{{ {\pa \mathcal {F} \over \pa \tilde{x}^\gamma}}-{ {\pa^2
\mathcal {F} \over \pa z^\gamma_i \pa z^\eta_j }}
  { {\pa^2 f^\eta \over \pa x^i \pa x^j
}}-{ {\pa^2 \mathcal {F}\over \pa \tilde{x}^\eta \pa z^\gamma_i }} {
{\pa f^\eta \over \pa x^i }}\Big\},\label{mean11}\ee
where $\mathcal {F}$ depends on the
volume forms (\cite{Shen}, \cite{Wu}).
Denote\be\ms:=1-\|\tilde{W}\|_{\tilde{h}}^2+\|W\|_{h}^2,\label{ms}\ee
where $\|W\|_{h}^2=h^{ij}W_iW_j$, $W_i=\tW_\alpha z^\alpha_i$, $(h^{ij})=(h_{ij})^{-1}$ and $h_{ij}=\tth_{\alpha\beta}z^\alpha_i z^\beta_j$. 
For the induced Randers metric $F$, the function $\mathcal{F}$ 
 can be
simultaneously given by 
\[\mathcal {F}={\varrho\over\chi}\sqrt{det(h_{ij})},\] where
$\varrho=\varrho(\ms)$ and $\chi$ are given by \beq
(\varrho,\chi)=\begin{cases}
(\ms^{-{n\over 2}},1), \ \ \ \ \  \ \ \mathrm{for\  the\  BH \ case}, \\
\Big(\ms^{1\over
2},(1-\|\tilde{W}\|_{\tilde{h}}^2)^{{n+1\over2}}\Big),\ \
\mathrm{for\ the\ HT \ case}.
\end{cases}\label{volume}\eeq
See Sec. 2 in \cite{Cui1}  for the details. The \emph{volume ratio function} for the Randers metric introduced in \cite{Cui1} is given by
\beq \Phi(\ms)&:=&2\varrho'(\ms)(1-\ms)+\varrho(\ms)\nonumber\\
&=&\begin{cases}
\ms^{-{n\over 2}}(-n\ms^{-{1}}+n+1), \ \ \ \ \  \ \ \mathrm{for\  the \ BH \ case},\\
\ms^{-{1\over
2}},\ \  \ \ \ \ \  \ \ \ \  \ \ \ \ \  \ \ \ \  \ \ \ \ \  \ \ \ \
\mathrm{for\ the\ HT \ case},
\end{cases} \label{Phi}\eeq
which appears naturally when we study the hypersurfaces isometrically immersed in Randers manifolds, and a similar function was given  in \cite{CuiShen1} for general $(\alpha,\beta)$-manifolds.

Let $\nabla^{\tilde{M}}$ be the Levi-Civita connection of
$(\tilde{M},\tth)$, 
$\tilde{W}_{\alpha|\eta}=\tth_{\alpha\beta}\tilde{W}^\beta_{|\eta}$
and let $\tilde{W}^\beta_{|\eta}$ be the coefficients of the covariant $(1,1)$-tensor $\nabla^{\tilde{M}}\tW$.
 Denote \be
A^{i\alpha}:=h^{ij}z^\alpha_j,\ \ \ \ \
 A^i_\alpha:=A^{i\delta}\tilde{h}_{\delta\alpha}, \ \ \ \ \ B^{\alpha\beta}:=h^{ij}z^\alpha_iz^\beta_j,\ \ \
B^\alpha_\beta:=B^{\alpha\delta}\tilde{h}_{\delta\beta}.
 \label{t_eq1}\ee

\begin{thm} \label{Killingmean} Let  $f:M^n\rightarrow
(\tilde{M}^{n+p},\tilde{F})$ be a submanifold isometrically
immersed in a Randers manifold, locally given by (\ref{graph}).
Suppose the navigation data of $(\tilde{M},\tilde{F})$ is
$(\tth,\tW)$. Then
 the
BH-mean curvature form (resp. the HT-mean curvature form) $\mathcal
{H}_f=\mathcal {H}_\gamma d\tilde{x}^\gamma$ of $f$ is given by\beq
\mathcal {H}_\gamma &=&{2(\mathrm{log}\chi)'\over
\varrho}\Big[2\varrho'
B^{\eta\delta}\tilde{W}_\delta\tilde{W}_\tau(\delta^\tau_\gamma-B^\tau_\gamma)-\varrho(\delta^\eta_\gamma-B^\eta_\gamma)\Big]
\tilde{W}_{\alpha|\eta}\tilde{W}^\alpha\nonumber\\
&&-{1\over
\varrho}\Big\{2\varrho'\Big[(\tilde{h}^{\alpha\beta}-B^{\alpha\beta})(\delta^\eta_\gamma-B^\eta_\gamma)+
B^{\eta\alpha}(\delta^\beta_\gamma-B^\beta_\gamma)\Big]\nonumber\\
&&-2B^{\eta\beta}\Big[2\varrho''(\tilde{h}^{\alpha\delta}-B^{\alpha\delta})\tilde{W}_\delta\tilde{W}_\tau(\delta^\tau_\gamma-B^\tau_\gamma)-\varrho'
(\delta^\alpha_\gamma-B^\alpha_\gamma)\Big]\Big\}\tilde{W}_{\alpha|\eta}\tilde{W}_\beta\nonumber\\
&&-{1\over
\varrho}\Big\{h^{ij}\Big[2\varrho'\tilde{W}_\alpha\tilde{W}_\beta(\delta^\alpha_\eta-B^\alpha_\eta)(\delta^\beta_\gamma-B^\beta_\gamma)+\varrho(\tilde{h}_{\gamma\eta}-B_{\gamma\eta})\Big]\nonumber\\
&&+2\tilde{W}_\delta\tilde{W}_\tau
A^{i\delta}A^{j\tau}\Big[2\varrho''\tilde{W}_\alpha\tilde{W}_\beta(\delta^\alpha_\eta-B^\alpha_\eta)(\delta^\beta_\gamma-B^\beta_\gamma)\nonumber\\
&&-\varrho'(\tilde{h}_{\gamma\eta}-B_{\gamma\eta})\Big]\Big\}\tau^\alpha_{ij},\label{mean}\eeq
where the
notation $\tau:=\tau^\alpha_{ij}dx^i\otimes
dx^j\otimes{\pa\over\pa{\tilde{x}^\alpha}}$ denotes the second
fundamental form of the immersion $f:(M,h)\rightarrow
(\tilde{M},\tilde{h})$, $\chi$ and $\varrho=\varrho(\ms)$ are given by (\ref{volume})
respectively, where we view $\chi$ as a function of $\|\tW\|^2_{\tth}$ and $(\mathrm{log}\chi)'$ is the derivative with respect to $\|\tW\|^2_{\tth}$.
\end{thm}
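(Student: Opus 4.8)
The plan is to compute each of the three terms in the general mean curvature formula \eqref{mean11} directly, using the explicit form $\mathcal{F} = (\varrho/\chi)\sqrt{\det(h_{ij})}$ from \eqref{volume} and the chain rule, exploiting systematically the fact that $\varrho$, $h_{ij}$, and (via $W_i = \tW_\alpha z^\alpha_i$, $h_{ij} = \tth_{\alpha\beta}z^\alpha_i z^\beta_j$) the whole of $\mathcal{F}$ depends on $z = (z^\alpha_i)$ only through the symmetric combinations $h_{ij}$ and $W_i$, and on $\tilde{x}$ only through $\tth_{\alpha\beta}(\tilde{x})$ and $\tW^\alpha(\tilde{x})$ (equivalently $\tW_\alpha(\tilde x)$). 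First I would record the basic derivatives: $\partial h_{ij}/\partial z^\gamma_k = 2\tth_{\gamma\beta}z^\beta_{(i}\delta^k_{j)}$ (symmetrized appropriately), $\partial W_i/\partial z^\gamma_k = \tW_\gamma \delta^k_i$, together with $\partial\sqrt{\det h}/\partial h_{ij} = \tfrac12\sqrt{\det h}\, h^{ij}$ and $\partial h^{ij}/\partial h_{kl} = -h^{ik}h^{jl}$ (suitably symmetrized), and $\partial\ms/\partial z^\gamma_i = 2(h^{ij}\tW_\gamma - W_k A^{k i}A^{j}_{\gamma}) W_j$ type expressions coming from \eqref{ms}; here the tensors $A^{i\alpha}, A^i_\alpha, B^{\alpha\beta}, B^\alpha_\beta$ of \eqref{t_eq1} are exactly what organizes these derivatives, with $B^\alpha_\beta$ being the projection onto the tangent space of $f$ and $\delta^\alpha_\gamma - B^\alpha_\gamma$ the projection onto the normal space.

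Next I would substitute into \eqref{mean11}. The three contributions are: (i) $\partial\mathcal{F}/\partial\tilde{x}^\gamma$, which produces the derivative of $\tth_{\alpha\beta}$ and of $\tW_\alpha$ in the direction $\partial/\partial\tilde x^\gamma$; here the Levi–Civita connection $\nabla^{\tilde M}$ of $\tth$ enters, and the Killing-free part of $\nabla^{\tilde M}\tW$, namely $\tW_{\alpha|\eta}$, appears — the $\partial\tth_{\alpha\beta}/\partial\tilde x^\gamma$ pieces should recombine, via the Christoffel symbols, with the $\partial\tth$ pieces hidden in the other two terms so that only covariant derivatives $\tW_{\alpha|\eta}$ survive (this is the mechanism by which the messy coordinate derivatives of $\tth$ cancel). (ii) the term $-\tfrac1{\mathcal F}(\partial^2\mathcal F/\partial z^\gamma_i\partial z^\eta_j)\tau^\eta_{ij}$ — here one must be careful that $\partial^2 f^\eta/\partial x^i\partial x^j$ appearing in \eqref{mean11} is not itself tensorial, but its contraction against $\partial^2\mathcal F/\partial z\,\partial z$ combines with the first-derivative terms to yield the genuine second fundamental form $\tau^\alpha_{ij}$ of $f:(M,h)\to(\tilde M,\tth)$; this is the standard "the non-tensorial pieces assemble into a covariant object" step. (iii) the mixed term $-\tfrac1{\mathcal F}(\partial^2\mathcal F/\partial\tilde x^\eta\partial z^\gamma_i) z^\eta_i$. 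After collecting, everything is expressed through $\varrho, \varrho', \varrho''$, the projections $B$ and $\delta-B$, the vector $\tW_\alpha$, the second fundamental form $\tau$, and the single connection quantity $\tW_{\alpha|\eta}$; the overall prefactor $2(\log\chi)'$ in the first line is where the HT-versus-BH distinction is packaged, since $\chi\equiv 1$ in the BH case makes that entire first line vanish, consistent with the simplification promised for Theorem \ref{geomean}.

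The main obstacle is bookkeeping: \eqref{mean11} involves second derivatives of $\mathcal F$, and $\mathcal F$ is a composite of $\varrho(\ms)$, $\chi(\|\tW\|^2_{\tth})$, and $\sqrt{\det h}$, each depending on $z$ and $\tilde x$ through several intermediate quantities ($\ms$, $h_{ij}$, $W_i$, $\tth_{\alpha\beta}$, $\tW_\alpha$). Keeping the chain rule organized — in particular tracking which $\tW_\delta\tW_\tau$ pairs come from differentiating $\ms$ twice (these are the $\varrho''$ terms) versus once (the $\varrho'$ terms), and ensuring the raising/lowering of indices via $\tth$ and $h$ is done with the right tensors from \eqref{t_eq1} — is the delicate part. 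I expect the cleanest route is to first differentiate treating $\ms$, $h_{ij}$, $W_i$ as independent arguments of $\mathcal F$, record the partials of $\mathcal F$ with respect to these, and only at the end substitute their derivatives with respect to $z^\gamma_i$ and $\tilde x^\gamma$; the cancellation of $\partial\tth$-terms against Christoffel symbols to produce $\tW_{\alpha|\eta}$ should be checked carefully but is expected to go through by the same computation that underlies the Riemannian mean-curvature formula. Once assembled, the result matches \eqref{mean} after grouping by the three independent data $\tW_{\alpha|\eta}\tW^\alpha$, $\tW_{\alpha|\eta}\tW_\beta$, and $\tau^\alpha_{ij}$.
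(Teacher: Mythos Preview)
Your proposal is correct and follows essentially the same route as the paper: compute the three terms of \eqref{mean11} using $\mathcal{F}=(\varrho/\chi)\sqrt{\det h}$, record the elementary derivatives of $h^{ij}$, $B^{\alpha\beta}$, $\|W\|_h^2$, $\sqrt{\det h}$ with respect to $\tilde x^\gamma$ and $z^\gamma_i$, and then assemble. The paper organizes the cancellation of the non-covariant pieces slightly differently from what you sketch---it introduces the auxiliary quantity $\Pi^i_\delta:=\varrho A^i_\delta+2\varrho' A^{i\alpha}(\delta^\beta_\delta-B^\beta_\delta)\tW_\alpha\tW_\beta$, shows that both $\partial\mathcal{F}/\partial\tilde x^\gamma$ and the mixed derivative contain matching $\Pi^i_\delta\,\tilde\Gamma$ terms that cancel, and that what remains combines $z^\eta_{ij}$ with $\tilde\Gamma^\eta_{\tau\delta}z^\tau_j z^\delta_i$, which is then identified with $\tau^\alpha_{ij}$ via the projection $(\delta^\alpha_\eta-B^\alpha_\eta)$---but this is exactly the ``non-tensorial pieces assemble into the covariant $\tau$'' step you anticipate, just made explicit.
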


\begin{proof}  
The procedure is to compute the terms in (\ref{mean11}) one by one. 
Since $\|\tilde{W}\|^2_{\tilde{h}}$ is not necessarily of constant length, we compute \be{\pa \over \pa
\tilde{x}^\gamma}\|\tilde{W}\|^2_{\tilde{h}}=2\tilde{h}^{\alpha\beta}\tilde{W}_{\alpha|\gamma}\tilde{W}_\beta.\label{XXX}\ee

From
$h_{ij}=\tilde{h}_{\alpha\beta}z^\alpha_i z^\beta_j$ and
(\ref{t_eq1}), we compute \be{\pa \over \pa
\tilde{x}^\gamma}h^{ij}=-A^{i\tau}A^{j\eta}(\tth_{\delta\eta}
\tilde{\Gamma}^\delta_{\tau\gamma}+\tth_{\tau\delta}\tilde{\Gamma}^\delta_{\eta\gamma}),\label{AAA}\ee
where  $\tilde{\Gamma}^\alpha_{\tau\delta}$ denote the Christoffel
symbols of the Levi-Civita connection of $\tth$. From (\ref{t_eq1})
and (\ref{AAA}), we get \be{\pa  \over \pa
\tilde{x}^\gamma}B^{\alpha\beta}=-B^{\alpha\tau}B^{\beta\eta}(\tth_{\delta\eta}\tilde{\Gamma}^\delta_{\tau\gamma}+\tth_{\tau\delta}\tilde{\Gamma}^\delta_{\eta\gamma}).\label{BBB}\ee
By the definition of $\|W\|^2_{h}$ in (\ref{ms}), we have
$\|W\|^2_{h}=B^{\alpha\beta}\tW_\alpha\tW_\beta$. It follows from
(\ref{BBB}) that
 \beq {\pa \over \pa
\tilde{x}^\gamma}\|W\|^2_{h}&=&2B^{\alpha\beta}{\pa \over \pa
\tilde{x}^\gamma}\tilde{W}_{\alpha}\tilde{W}_\beta-2B^{\alpha\tau}
B^\beta_\delta\tilde{\Gamma}^\delta_{\tau\gamma}\tilde{W}_\alpha\tilde{W}_\beta\nonumber\\
&=&2B^{\alpha\beta}\tilde{W}_{\alpha|\gamma}\tilde{W}_\beta+2B^{\alpha\tau}(\delta^\beta_\delta-
B^\beta_\delta)\tilde{\Gamma}^\delta_{\tau\gamma}\tilde{W}_\alpha\tilde{W}_\beta.\label{Wh}\eeq

It is easy to show \be{\pa \over \pa
\tilde{x}^\gamma}\sqrt{det(h_{ij})}=B^\eta_\delta\tilde{\Gamma}^\delta_{\eta\gamma}\sqrt{det(h_{ij})}.\label{CXCX}\ee Let
\[\Pi^i_\delta:=\varrho
A^i_\delta+2\varrho'A^{i\alpha}(\delta^\beta_\delta-
B^\beta_\delta)\tilde{W}_\alpha\tilde{W}_\beta.\] Note that
$\mathcal {F}={\varrho\over\chi}\sqrt{det(h_{ij})}$ with
$(\varrho,\chi)$ given by (\ref{volume}).  From the equations (\ref{ms}), (\ref{XXX}),  (\ref{Wh}) and (\ref{CXCX}), we compute 
 \beq{ {\pa  \over \pa
\tilde{x}^\gamma}}\mathcal
{F}&=&2\Big[\varrho\Big({1\over\chi}\Big)'\tilde{h}^{\alpha\beta}-{\varrho'\over
\chi}(\tilde{h}^{\alpha\beta}-B^{\alpha\beta})\Big]\tilde{W}_{\alpha|\gamma}\tilde{W}_\beta\sqrt{det(h_{ij})}\nonumber\\
&&+{1\over \chi}\Big[ \varrho
B^\tau_\delta+2\varrho'B^{\alpha\tau}(\delta^\beta_\delta-
B^\beta_\delta)\tilde{W}_\alpha\tilde{W}_\beta\Big]\tilde{\Gamma}^\delta_{\tau\gamma}\sqrt{det(h_{ij})}\nonumber\\
&=&2\Big[\varrho\Big({1\over\chi}\Big)'\tilde{h}^{\alpha\beta}-{\varrho'\over
\chi}(\tilde{h}^{\alpha\beta}-B^{\alpha\beta})\Big]\tilde{W}_{\alpha|\gamma}\tilde{W}_\beta\sqrt{det(h_{ij})}\nonumber\\
&&+{1\over \chi}\Pi^i_\delta
z^\tau_i\tilde{\Gamma}^\delta_{\tau\gamma}\sqrt{det(h_{ij})}.\label{FX}
\eeq

Next, we compute the second term of (\ref{mean11}). Since $\|\tW\|_{\tth}$ is independent of $z^\alpha_i$, by
(\ref{t_eq1}), $h_{ij}=\tth_{\alpha\beta}z^\alpha_iz^\beta_j$ and
$\|W\|^2_{h}=B^{\alpha\beta}\tW_\alpha\tW_\beta$,
 we can get immediately
 \be\pzri
h^{kl}=-(h^{ki}A^l_\gamma+h^{li}A^k_\gamma),\label{hkl}\ee \be\pzri
B^{\alpha\beta}=A^{i\alpha}(\delta^\beta_\gamma-B^\beta_\gamma)+A^{i\beta}(\delta^\alpha_\gamma-B^\alpha_\gamma),\label{Balbt1}\ee
\be\pzri
\|W\|^2_{h}=2A^{i\alpha}(\delta^\beta_\gamma-B^\beta_\gamma)\tilde{W}_\alpha\tilde{W}_\beta.\label{Balbt}\ee

It is easy to show
 \be \pzri\sqrt{de
t(h_{ij})}=\sqrt{det(h_{ij})}A^i_\gamma.\label{DETA}\ee
 Since
$\mathcal {F}={\varrho\over\chi}\sqrt{det(h_{ij})}$, from (\ref{ms}),
(\ref{Balbt}) and (\ref{DETA}) we get
 \beq
\pzri \mathcal
{F}&=&{1\over\chi}\Big\{2\tilde{W}_\alpha\tilde{W}_\beta\varrho'
A^{i\alpha}(\delta^\beta_\gamma-B^\beta_\gamma)+\varrho
A^i_\gamma\Big\}\sqrt{de t(h_{ij})}\nonumber\\
&=:&{1\over\chi}\Pi^i_\gamma\sqrt{de t(h_{ij})}.\label{F1}\eeq By
(\ref{t_eq1}) and (\ref{hkl}), we can compute easily \be \pzej
A^{i\alpha}=h^{ij}(\delta^\alpha_\eta-B^\alpha_\eta)-A^{j\alpha}A^i_\eta.\label{A_ia1}\ee
Differentiating (\ref{F1}), using (\ref{Balbt1}), (\ref{Balbt}),
(\ref{DETA}), (\ref{A_ia1}) and by a direct computation, we get
 \beq {{\pa^2 \mathcal {F}\over \pa z^\gamma_i
\pa z^\eta_j }}&=&{1\over\chi}{{\pa\over \pa z^\eta_j
}}\Big(\Pi^i_\gamma\sqrt{de t(h_{ij})}\Big)\nonumber\\
&=&{1\over\chi}\Big\{\varrho(A^i_\gamma A^j_\eta-A^j_\gamma A^i_\eta)\nonumber\\
 &&+2\tilde{W}_\alpha\tilde{W}_\beta\varrho'(\delta^\beta_\gamma-B^\beta_\gamma)(A^{i\alpha}A^j_\eta-A^{j\alpha}A^i_\eta)\nonumber\\
 &&+2\tilde{W}_\alpha\tilde{W}_\beta\varrho'(\delta^\beta_\eta-B^\beta_\eta)(A^{j\alpha}A^i_\gamma-A^{i\alpha}A^j_\gamma)\nonumber\\
&&+h^{ij}\Big[2\tilde{W}_\alpha\tilde{W}_\beta\varrho'(\delta^\alpha_\eta-B^\alpha_\eta)(\delta^\beta_\gamma-B^\beta_\gamma)\nonumber\\
&&+\varrho(\tilde{h}_{\gamma\eta}-B_{\gamma\eta})\Big]\nonumber\\
&&+2\tilde{W}_\delta\tilde{W}_\tau
A^{i\delta}A^{j\tau}\Big[2\tilde{W}_\alpha\tilde{W}_\beta\varrho''(\delta^\alpha_\eta-B^\alpha_\eta)(\delta^\beta_\gamma-B^\beta_\gamma)\nonumber\\
&&-\varrho'(\tilde{h}_{\gamma\eta}-B_{\gamma\eta})\Big]\Big\}\sqrt{det(h_{ij})},\label{FZZ1}\eeq
where $B_{\gamma\eta}:=\tth_{\gamma\alpha}B^{\alpha}_\eta$.
  Contracting the equation (\ref{FZZ1}) gives
\beq && {{\pa^2 \mathcal {F}\over \pa z^\gamma_i
\pa z^\eta_j }}\Big[ {
{\pa^2 f^\eta \over \pa x^i \pa x^j
}}+\tilde{\Gamma}^\eta_{\tau\delta}z^\tau_jz^\delta_i\Big]\nonumber\\
&=&{1\over\chi}\Big\{h^{ij}\Big[2\tilde{W}_\alpha\tilde{W}_\beta\varrho'(\delta^\alpha_\eta-B^\alpha_\eta)(\delta^\beta_\gamma-B^\beta_\gamma)+\varrho(\tilde{h}_{\gamma\eta}-B_{\gamma\eta})\Big]\nonumber\\
&&+2\tilde{W}_\delta\tilde{W}_\tau
A^{i\delta}A^{j\tau}\Big[2\tilde{W}_\alpha\tilde{W}_\beta\varrho''(\delta^\alpha_\eta-B^\alpha_\eta)(\delta^\beta_\gamma-B^\beta_\gamma)\nonumber\\
&&-\varrho'(\tilde{h}_{\gamma\eta}-B_{\gamma\eta})\Big]\Big\}\Big[ {
{\pa^2 f^\eta \over \pa x^i \pa x^j
}}+\tilde{\Gamma}^\eta_{\tau\delta}z^\tau_jz^\delta_i\Big]\sqrt{det(h_{ij})}.\label{FZZ}\eeq

We then compute the third term of (\ref{mean11}). Note that
$\tW_{\alpha|\eta}$ and $\tilde{\Gamma}^\delta_{\tau\eta}$ are independent of $z^\gamma_i$. 
By
(\ref{FX}),  (\ref{Balbt1}), (\ref{Balbt}), (\ref{DETA})  and the first equality of
the equation (\ref{FZZ1}), we get immediately \beq { {\pa^2 \mathcal {F} \over \pa
\tilde{x}^\eta \pa z^\gamma_i }}&=&{\pa\over\pa z^\gamma_i
 }\Big({\pa\mathcal {F} \over \pa \tilde{x}^\eta}\Big)\nonumber\\
&=&\Big\{4\Big[\varrho'\Big({1\over\chi}\Big)'\tilde{h}^{\alpha\beta}-{\varrho''\over
\chi}(\tilde{h}^{\alpha\beta}-B^{\alpha\beta})\Big]A^{i\delta}\tilde{W}_\delta\tilde{W}_\tau(\delta^\tau_\gamma-B^\tau_\gamma)\nonumber\\
&&+2\Big[\varrho\Big({1\over\chi}\Big)'\tilde{h}^{\alpha\beta}-{\varrho'\over
\chi}(\tilde{h}^{\alpha\beta}-B^{\alpha\beta})\Big]A^i_\gamma \nonumber\\
&&+2{\varrho'\over
\chi}\Big[A^{i\alpha}(\delta^\beta_\gamma-B^\beta_\gamma)+A^{i\beta}(\delta^\alpha_\gamma-B^\alpha_\gamma)\Big]\Big\}
\tilde{W}_{\alpha|\eta}\tilde{W}_\beta\sqrt{det(h_{ij})}\nonumber\\
&&+{1\over \chi}\Big[\pzri\Big(\Pi^k_\delta \sqrt{det(h_{ij})}
\Big)z^\tau_k+\Pi^i_\delta \sqrt{det(h_{ij})}
\delta^\tau_\gamma\Big]\tilde{\Gamma}^\delta_{\tau\eta}\nonumber\eeq

\beq&=&\Big\{4\Big[\varrho'\Big({1\over\chi}\Big)'\tilde{h}^{\alpha\beta}-{\varrho''\over
\chi}(\tilde{h}^{\alpha\beta}-B^{\alpha\beta})\Big]A^{i\delta}\tilde{W}_\delta\tilde{W}_\tau(\delta^\tau_\gamma-B^\tau_\gamma)\nonumber\\
&&+2\Big[\varrho\Big({1\over\chi}\Big)'\tilde{h}^{\alpha\beta}-{\varrho'\over
\chi}(\tilde{h}^{\alpha\beta}-B^{\alpha\beta})\Big]A^i_\gamma \nonumber\\
&&+2{\varrho'\over
\chi}\Big[A^{i\alpha}(\delta^\beta_\gamma-B^\beta_\gamma)+A^{i\beta}(\delta^\alpha_\gamma-B^\alpha_\gamma)\Big]\Big\}
\tilde{W}_{\alpha|\eta}\tilde{W}_\beta\sqrt{det(h_{ij})}\nonumber\\
&&+{1\over \chi}\Big[\chi{ {\pa^2 \mathcal {F} \over \pa z^\delta_k \pa
z^\gamma_i }}z^\tau_k+\Pi^i_\delta
\delta^\tau_\gamma\sqrt{det(h_{ij})}\Big]\tilde{\Gamma}^\delta_{\tau\eta}.\label{FXZ}
\eeq

Now by (\ref{FX}), (\ref{FZZ}) and (\ref{FXZ}), we compute
the mean curvature form (\ref{mean11}): \beq \mathcal {H}_\gamma &=&-{1\over \mathcal {F}}\Big\{-{ {\pa
\mathcal {F} \over \pa \tilde{x}^\gamma}}+{ {\pa^2 \mathcal {F}
\over \pa z^\gamma_i \pa z^\eta_j }}
  { {\pa^2 f^\eta \over \pa x^i \pa x^j
}}+{ {\pa^2 \mathcal {F} \over \pa \tilde{x}^\eta \pa
z^\gamma_i }} { {\pa f^\eta \over \pa x^i }}\Big\}\nonumber\\
&=&-{1\over \mathcal {F}}\Big\{-2\Big[\varrho\Big({1\over\chi}\Big)'\tilde{h}^{\alpha\beta}-{\varrho'\over
\chi}(\tilde{h}^{\alpha\beta}-B^{\alpha\beta})\Big]\tilde{W}_{\alpha|\gamma}\tilde{W}_\beta\nonumber\\
&&-{1\over \chi}\Pi^i_\delta
z^\tau_i\tilde{\Gamma}^\delta_{\tau\gamma}
+{ {\pa^2 \mathcal {F}
\over \pa z^\gamma_i \pa z^\eta_j }}
  { {\pa^2 f^\eta \over \pa x^i \pa x^j
}}\nonumber\\
&&+\Big(4\Big[\varrho'\Big({1\over\chi}\Big)'\tilde{h}^{\alpha\beta}-{\varrho''\over
\chi}(\tilde{h}^{\alpha\beta}-B^{\alpha\beta})\Big]B^{\eta\delta}\tilde{W}_\delta\tilde{W}_\tau(\delta^\tau_\gamma-B^\tau_\gamma)\nonumber\\
&&+2\Big[\varrho\Big({1\over\chi}\Big)'\tilde{h}^{\alpha\beta}-{\varrho'\over
\chi}(\tilde{h}^{\alpha\beta}-B^{\alpha\beta})\Big]B^\eta_\gamma  \nonumber\\
&&+2{\varrho'\over
\chi}\Big[B^{\eta\alpha}(\delta^\beta_\gamma-B^\beta_\gamma)+B^{\eta\beta}(\delta^\alpha_\gamma-B^\alpha_\gamma)\Big]\Big)
\tilde{W}_{\alpha|\eta}\tilde{W}_\beta+{1\over \chi}\Pi^i_\delta
\tilde{\Gamma}^\delta_{\gamma\eta}z^\eta_i\Big\}\sqrt{det(h_{ij})}\nonumber\\
&&-{1\over \mathcal {F}}{ {\pa^2 \mathcal {F} \over \pa z^\delta_k \pa
z^\gamma_i }}\tilde{\Gamma}^\delta_{\tau\eta}z^\tau_kz^\eta_i\nonumber\\
&=&-{\chi\over
\varrho}\Big\{-2\Big[\varrho\Big({1\over\chi}\Big)'\tilde{h}^{\alpha\beta}-{\varrho'\over
\chi}(\tilde{h}^{\alpha\beta}-B^{\alpha\beta})\Big](\delta^\eta_\gamma-B^\eta_\gamma)\nonumber\\
&&+4\Big[\varrho'\Big({1\over\chi}\Big)'\tilde{h}^{\alpha\beta}-{\varrho''\over
\chi}(\tilde{h}^{\alpha\beta}-B^{\alpha\beta})\Big]B^{\eta\delta}\tilde{W}_\delta\tilde{W}_\tau(\delta^\tau_\gamma-B^\tau_\gamma)\nonumber\\
&&+2{\varrho'\over
\chi}\Big[B^{\eta\alpha}(\delta^\beta_\gamma-B^\beta_\gamma)+B^{\eta\beta}
(\delta^\alpha_\gamma-B^\alpha_\gamma)\Big]\Big\}
 \tilde{W}_{\alpha|\eta}\tilde{W}_\beta\nonumber\\
 &&-{1\over \mathcal {F}}{{\pa^2 \mathcal {F}\over \pa z^\gamma_i \pa z^\eta_j
}}\Big[ { {\pa^2 f^\eta \over \pa x^i \pa x^j
}}+\tilde{\Gamma}^\eta_{\tau\delta}z^\tau_jz^\delta_i\Big]\nonumber\eeq
\beq&=&-{2(\mathrm{log}\chi)'\over
\varrho}\Big[\varrho(\delta^\eta_\gamma-B^\eta_\gamma)-2\varrho'
B^{\eta\delta}\tilde{W}_\delta\tilde{W}_\tau(\delta^\tau_\gamma-B^\tau_\gamma)\Big]
\tilde{W}_{\alpha|\eta}\tilde{W}^\alpha\nonumber\\
&&-{1\over
\varrho}\Big\{2\varrho'\Big[(\tilde{h}^{\alpha\beta}-B^{\alpha\beta})(\delta^\eta_\gamma-B^\eta_\gamma)+
B^{\eta\alpha}(\delta^\beta_\gamma-B^\beta_\gamma)\Big]\nonumber\\
&&-2B^{\eta\beta}\Big[2\varrho''(\tilde{h}^{\alpha\delta}-B^{\alpha\delta})\tilde{W}_\delta\tilde{W}_\tau(\delta^\tau_\gamma-B^\tau_\gamma)-\varrho'
(\delta^\alpha_\gamma-B^\alpha_\gamma)\Big]\Big\}\tilde{W}_{\alpha|\eta}\tilde{W}_\beta\nonumber\\
&&-{1\over
\varrho}\Big\{h^{ij}\Big[2\varrho'\tilde{W}_\alpha\tilde{W}_\beta(\delta^\alpha_\eta-B^\alpha_\eta)(\delta^\beta_\gamma-B^\beta_\gamma)+\varrho(\tilde{h}_{\gamma\eta}-B_{\gamma\eta})\Big]\nonumber\\
&&+2\tilde{W}_\delta\tilde{W}_\tau
A^{i\delta}A^{j\tau}\Big[2\varrho''\tilde{W}_\alpha\tilde{W}_\beta(\delta^\alpha_\eta-B^\alpha_\eta)(\delta^\beta_\gamma-B^\beta_\gamma)\nonumber\\
&&-\varrho'(\tilde{h}_{\gamma\eta}-B_{\gamma\eta})\Big]\Big\}\Big[ {
{\pa^2 f^\eta \over \pa x^i \pa x^j
}}+\tilde{\Gamma}^\eta_{\tau\delta}z^\tau_jz^\delta_i\Big].\label{mean1}\eeq

It is well known that for a Riemannian isometric immersion
$f:(M,h)\rightarrow (\tilde{M},\tth)$, the second fundamental form
is given by $\tau:=\tau^\alpha_{ij}dx^i\otimes
dx^j\otimes{\pa\over\pa{\tilde{x}^\alpha}}$, where
\be \tau^\alpha_{ij}:=z^\alpha_{ij}
+\tilde{\Gamma}^\alpha_{\tau\delta}z^\tau_jz^\delta_i-\Gamma^k_{ij}z^\alpha_k,\label{tau_or}\ee
$z^\alpha_{ij}:={\pa^2 f^\alpha \over \pa x^i \pa x^j }$,
$\Gamma^k_{ij}$ and $\tilde{\Gamma}^\alpha_{\tau\delta}$ are the
Christoffel symbols of the corresponding  Levi-Civita connections of
$(M,h)$ and $(\tilde{M},\tth)$, respectively. Since $f$ is
isometric, $h_{ij}=\tth_{\alpha\beta}z^\alpha_iz^\beta_j$, a direct
computation gives
\[\Gamma^k_{ij}=A^k_\eta(z^\eta_{ij}
+\tilde{\Gamma}^\eta_{\tau\delta}z^\tau_jz^\delta_i).\] Then
\be\tau^\alpha_{ij}=(\delta^\alpha_\eta-B^\alpha_\eta)(z^\eta_{ij}
+\tilde{\Gamma}^\eta_{\tau\delta}z^\tau_jz^\delta_i).\label{tau}\ee
Therefore (\ref{mean}) follows from (\ref{mean1}) and
(\ref{tau}) immediately.\end{proof}

If $\tW$ is a Killing field,  then $\tW_{\alpha|\beta}+\tW_{\beta|\alpha}=0$ and Theorem \ref{Killingmean} reduces to the following theorem by a straightforward computation.

\begin{thm} \label{Killingmean2} Let  $f:M^n\rightarrow
(\tilde{M}^{n+p},\tilde{F})$ be a submanifold isometrically
immersed in a Randers manifold, locally given by (\ref{graph}).
Suppose the navigation data of $(\tilde{M},\tilde{F})$ is
$(\tth,\tW)$, where $\tilde{W}$ is a Killing  vector field. Then
 the
BH-mean curvature form (resp. the HT-mean curvature form) $\mathcal
{H}_f=\mathcal {H}_\gamma d\tilde{x}^\gamma$ of $f$ is given by\beq
\mathcal{H}_\gamma&=&{2(\mathrm{log}\chi)'\over
\varrho}\Big[2\varrho'
B^{\eta\delta}\tilde{W}_\delta\tilde{W}_\tau(\delta^\tau_\gamma-B^\tau_\gamma)-\varrho(\delta^\eta_\gamma-B^\eta_\gamma)\Big]
\tilde{W}_{\alpha|\eta}\tilde{W}^\alpha\nonumber\\
&&-{1\over \varrho}\Big\{4\varrho'B^{\eta\beta}
\tilde{W}_{\alpha|\eta}\tilde{W}_\beta(\delta^\alpha_\gamma-B^\alpha_\gamma)-2\Big[2\varrho''
B^{\eta\delta}\tilde{W}_\delta\tilde{W}_\tau(\delta^\tau_\gamma-B^\tau_\gamma)-\varrho'(\delta^\eta_\gamma-B^\eta_\gamma)\Big]
\tilde{W}_{\alpha|\eta}\tilde{W}^\alpha\Big\}\nonumber\\
&&-{1\over \varrho}\Big\{h^{ij}\Big[2\varrho'\tilde{W}_\alpha\tilde{W}_\beta(\delta^\beta_\gamma-B^\beta_\gamma)+\varrho\tilde{h}_{\gamma\alpha}\Big]\nonumber\\
&&+2\tilde{W}_\delta\tilde{W}_\tau
A^{i\delta}A^{j\tau}\Big[2\varrho''\tilde{W}_\alpha\tilde{W}_\beta(\delta^\beta_\gamma-B^\beta_\gamma)-\varrho'
\tilde{h}_{\gamma\alpha}\Big]\Big\}\tau^\alpha_{ij},\label{killingmean}\eeq
where the
notations are the same as those in Theorem \ref{Killingmean}.
\end{thm}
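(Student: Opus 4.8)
The plan is to substitute the Killing identity $\tW_{\alpha|\eta}+\tW_{\eta|\alpha}=0$ into the general formula (\ref{mean}) of Theorem \ref{Killingmean}. Only three facts will be used: (i) under this hypothesis the $(0,2)$-tensor $\tW_{\alpha|\eta}$ is skew-symmetric, so its contraction against any tensor symmetric in the pair $(\alpha,\eta)$ vanishes, and the labels $\alpha,\eta$ may be transposed at the cost of a sign; (ii) $(\tth^{\alpha\beta}-B^{\alpha\beta})\tW_\beta=\tW^\alpha-B^{\alpha\beta}\tW_\beta$; and (iii) $\tth_{\gamma\eta}-B_{\gamma\eta}=\tth_{\gamma\alpha}(\delta^\alpha_\eta-B^\alpha_\eta)$, which is immediate from the definition $B_{\gamma\eta}=\tth_{\gamma\alpha}B^\alpha_\eta$. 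I would handle the three groups of terms in (\ref{mean}) separately.

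The first group of (\ref{mean}) — the line carrying $2(\mathrm{log}\chi)'/\varrho$ — involves only the combination $\tW_{\alpha|\eta}\tW^\alpha$ together with the tensors $B^{\eta\delta}$ and $B^\eta_\gamma$, so the Killing hypothesis does not touch it and it survives verbatim as the first line of (\ref{killingmean}) (in particular this line is absent in the BH case, where $\chi=1$ and $(\mathrm{log}\chi)'=0$). The last group — the one carrying the second fundamental form — contains no factor $\tW_{\alpha|\eta}$ at all; using (iii) for the $\varrho$- and $\varrho'$-pieces one pulls the common factor $(\delta^\alpha_\eta-B^\alpha_\eta)$ out of the braces and absorbs it into $\tau$ via (\ref{tau}), which already yields the last group of (\ref{killingmean}) without any use of the Killing condition.

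All the genuine work is in the middle group, the terms multiplying $\tW_{\alpha|\eta}\tW_\beta$; set $V^\alpha:=B^{\alpha\beta}\tW_\beta$, the tangential part of $\tW$. Expanding the braces gives four terms contracted with $\tW_{\alpha|\eta}\tW_\beta$: $2\varrho'(\tth^{\alpha\beta}-B^{\alpha\beta})(\delta^\eta_\gamma-B^\eta_\gamma)$; $2\varrho'B^{\eta\alpha}(\delta^\beta_\gamma-B^\beta_\gamma)$; $-4\varrho''B^{\eta\beta}(\tth^{\alpha\delta}-B^{\alpha\delta})\tW_\delta\tW_\tau(\delta^\tau_\gamma-B^\tau_\gamma)$; and $2\varrho'B^{\eta\beta}(\delta^\alpha_\gamma-B^\alpha_\gamma)$. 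The second of these carries $B^{\eta\alpha}\tW_{\alpha|\eta}$, a symmetric tensor against the skew $\tW_{\alpha|\eta}$, and so it drops out entirely. In the first and third terms I use (ii): the $\tW^\alpha$-halves become directly $2\varrho'(\delta^\eta_\gamma-B^\eta_\gamma)\tW_{\alpha|\eta}\tW^\alpha$ and $-4\varrho''B^{\eta\delta}\tW_\delta\tW_\tau(\delta^\tau_\gamma-B^\tau_\gamma)\tW_{\alpha|\eta}\tW^\alpha$, i.e. two of the three terms in the middle group of (\ref{killingmean}); the $V^\alpha$-half of the third term is proportional to $V^\eta V^\alpha\tW_{\alpha|\eta}$ (symmetric against skew) and vanishes. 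Finally, the $V^\alpha$-half of the first term, $-2\varrho'(\delta^\eta_\gamma-B^\eta_\gamma)V^\alpha\tW_{\alpha|\eta}$, is transposed with a sign into $2\varrho'(\delta^\alpha_\gamma-B^\alpha_\gamma)V^\eta\tW_{\alpha|\eta}=2\varrho'B^{\eta\beta}(\delta^\alpha_\gamma-B^\alpha_\gamma)\tW_{\alpha|\eta}\tW_\beta$, which is precisely the fourth term above; adding the two produces the coefficient $4\varrho'$ in front of $B^{\eta\beta}\tW_{\alpha|\eta}\tW_\beta(\delta^\alpha_\gamma-B^\alpha_\gamma)$ in (\ref{killingmean}). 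Collecting the survivors reproduces the middle group of (\ref{killingmean}), and the theorem follows.

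The only real obstacle is bookkeeping: keeping straight which slot of $\tW_{\alpha|\eta}$ each factor sits in, tracking the recurring projection $V^\alpha=B^{\alpha\beta}\tW_\beta$, and controlling the signs generated by the transpositions. Nothing conceptual is involved — this is exactly the ``straightforward computation'' the text alludes to — so I would organize the write-up precisely as the three independent reductions described above.
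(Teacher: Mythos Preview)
Your argument is correct and is precisely the ``straightforward computation'' the paper invokes without detail: substitute the Killing identity into (\ref{mean}), exploit the skew-symmetry of $\tW_{\alpha|\eta}$ against the symmetric factors $B^{\eta\alpha}$ and $V^\eta V^\alpha$, and use idempotence of the normal projection to strip the factor $(\delta^\alpha_\eta-B^\alpha_\eta)$ from the $\tau$-group. Your observation that the $\tau$-group simplification requires no Killing hypothesis is accurate and worth keeping; the only minor imprecision is that the absorption step uses the idempotence $(\delta^\alpha_\eta-B^\alpha_\eta)\tau^\eta_{ij}=\tau^\alpha_{ij}$ rather than (\ref{tau}) itself.
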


\begin{rem} If $\tW$ is a Killing field of constant length, then $\tilde{W}_{\alpha|\eta}\tilde{W}^\alpha=0$ and Theorem \ref{Killingmean2} reduces to Lemma 3.2 in \cite{Cui1}. Although the formula (\ref{killingmean}) is much more complicate than that in \cite{Cui1},
 it will be sharply simplified in the hypersurface case by using the following lemma, which is really surprising.
\end{rem}

Now we assume that  $f:(M^n,h)\rightarrow
(\tilde{M}^{n+1},\tilde{h})$
is a hypersurface and $N$ is a unit normal
vector field  on $f(M)$.   
  We shall use
$\pa_\alpha$ instead of ${\pa\over\pa\tilde{x}^\alpha}$ and
$\la,\ra$ instead of $\la,\ra_{\tth}$ for simplicity of notations.

\begin{lem} \label{MMM}   In the case of hypersurface $f:(M^n,h)\rightarrow
(\tilde{M}^{n+1},\tilde{h})$ with navigation data $(\tth,\tW)$, where $\tilde{W}$ is a Killing vector field,  for any $\tX\in T\tilde{M}$ along $f$, we have
  \be B^{\eta\beta}\tilde{W}_\beta \tilde{W}_{\alpha|\eta}\tW^\alpha=\la\nabla^{\tilde{M}}_{df(W)}\tilde{W}),\tW\ra, \ \ \ \ \  \tW_\tau(\delta^\tau_\gamma-B^\tau_\gamma)\tX^\gamma=w\la N,\tX \ra, \label{imp000}\ee
  \be B^{\eta\beta}\tilde{W}_\beta
\tilde{W}_{\alpha|\eta}(\delta^\alpha_\gamma-B^\alpha_\gamma)\tX^\gamma=\la \nabla^{\tilde{M}}_{df(W)}\tilde{W},N\ra \la N,\tX\ra, \ \ \ \ h^{ij}\tau^\alpha_{ij}\tW_\alpha=nHw,\label{impq}\ee

\be
 \tW_\delta\tW_\tau
A^{i\delta}A^{j\tau}\tau^\alpha_{ij}\tX_\alpha=-\Big[\la df(\nabla w),\tW\ra +{1\over 2}N(\|\tW\|_{\tth}^2)\Big]\la N,\tX \ra, \label{CCC1}\ee

\be
 \tW_\delta\tW_\tau
A^{i\delta}A^{j\tau}\tau^\alpha_{ij}\tW_\alpha=-\Big[\la df(\nabla w),\tW\ra +{1\over 2}N(\|\tW\|_{\tth}^2)\Big]w,  \label{CCC2}\ee
\be\la \nabla^{\tilde{M}}_{df(W)}\tilde{W},N\ra=-{1\over 2}N(\|\tW\|_{\tth}^2),\ \ \ \ \ \la \nabla^{\tilde{M}}_{df(W)}\tilde{W},\tW\ra=-{1\over 2}N(\|\tW\|_{\tth}^2)w, \label{CCC3} \ee
where $w=\la N,\tW\ra$ and $H$ is the mean curvature of $f$ with respect to $N$.
\end{lem}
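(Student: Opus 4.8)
The plan is to translate every tensorial expression in the statement into the $\tth$-orthogonal splitting $T\tilde{M}=df(TM)\oplus\mathbb{R}N$ along $f$; once that dictionary is in place, most of the displayed identities are pure bookkeeping and all the real content is concentrated in one short computation coupling the Gauss--Weingarten formulas with the Killing equation.

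\emph{Step 1 (the dictionary).} Applying $B^\alpha_\beta=h^{ij}z^\alpha_i z^\delta_j\tth_{\delta\beta}$ to $z^\beta_k$ returns $z^\alpha_k$ and to $N^\beta$ returns $0$, so $B^\alpha_\beta$ is the $\tth$-orthogonal projection onto $df(TM)$ and $\delta^\alpha_\beta-B^\alpha_\beta$ the projection onto $\mathbb{R}N$; hence $(\delta^\alpha_\beta-B^\alpha_\beta)\tX^\beta=\la N,\tX\ra N^\alpha$ for every $\tX$ along $f$. Writing $W=W^i\,\pa/\pa x^i$ with $W^i=h^{ij}W_j$, $W_j=\tW_\alpha z^\alpha_j$, one checks $df(W)^\alpha=B^{\alpha\beta}\tW_\beta$ and $A^{i\alpha}\tW_\alpha=W^i$, so $df(W)$ is the tangential part of $\tW$ and $\tW=df(W)+wN$ with $w=\la N,\tW\ra$; in particular $\|W\|^2_h=\|\tW\|^2_{\tth}-w^2$, i.e. $\ms=1-w^2$. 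Since $\tau^\alpha_{ij}$ in (\ref{tau}) is normal-valued I may write $\tau^\alpha_{ij}=\II_{ij}N^\alpha$ with $\II_{ij}$ the scalar second fundamental form, whence $h^{ij}\tau^\alpha_{ij}=nHN^\alpha$ and $\tW_\delta\tW_\tau A^{i\delta}A^{j\tau}\tau^\alpha_{ij}=\II(W,W)N^\alpha$. Finally, $B^{\eta\beta}\tW_\beta\tW_{\alpha|\eta}=(\nabla^{\tilde{M}}_{df(W)}\tW)_\alpha$ is just the definition of $\tW_{\alpha|\eta}$ contracted with $df(W)^\eta$. With these facts, both identities in (\ref{imp000}) and both in (\ref{impq}) drop out by projecting onto $\mathbb{R}N$ and contracting.

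\emph{Step 2 (the one real computation).} Differentiating $w=\la N,\tW\ra$ along a tangent vector $X$ on $M$ and using the Weingarten formula $\nabla^{\tilde{M}}_{df(X)}N=-df(SX)$ together with $\la SX,W\ra_h=\II(X,W)$ gives
\[
X(w)=-\II(X,W)+\la N,\nabla^{\tilde{M}}_{df(X)}\tW\ra .
\]
Combining this with the Killing relation $\la\nabla^{\tilde{M}}_{df(X)}\tW,N\ra+\la\nabla^{\tilde{M}}_N\tW,df(X)\ra=0$, and specialising to $X=W$ using additionally $\la\nabla^{\tilde{M}}_N\tW,N\ra=0$ (Killing) and $2\la\nabla^{\tilde{M}}_N\tW,\tW\ra=N(\|\tW\|^2_{\tth})$ (metric compatibility), I obtain
\[
\II(W,W)=-W(w)-\tfrac12 N(\|\tW\|^2_{\tth}),\qquad
\la\nabla^{\tilde{M}}_{df(W)}\tW,N\ra=-\tfrac12 N(\|\tW\|^2_{\tth}).
\]
The second of these is the first equation of (\ref{CCC3}); feeding it into $\la\nabla^{\tilde{M}}_{df(W)}\tW,\tW\ra=\la\nabla^{\tilde{M}}_{df(W)}\tW,df(W)\ra+w\la\nabla^{\tilde{M}}_{df(W)}\tW,N\ra$ and using $\la\nabla^{\tilde{M}}_{df(W)}\tW,df(W)\ra=0$ (Killing with $X=Y=df(W)$) gives the second equation of (\ref{CCC3}).

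\emph{Step 3 (assembly).} For (\ref{CCC1}), Step 1 gives $\tW_\delta\tW_\tau A^{i\delta}A^{j\tau}\tau^\alpha_{ij}\tX_\alpha=\II(W,W)\la N,\tX\ra$; substituting $\II(W,W)=-W(w)-\tfrac12 N(\|\tW\|^2_{\tth})$ and $W(w)=\la df(\nabla w),df(W)\ra_{\tth}=\la df(\nabla w),\tW\ra$ (since $df(\nabla w)\perp N$) yields (\ref{CCC1}), and (\ref{CCC2}) is the special case $\tX=\tW$. The main obstacle is Step 2: one has to disentangle the Gauss--Weingarten identities from the Killing condition carefully enough to express both $\II(W,W)$ and $\la\nabla^{\tilde{M}}_{df(W)}\tW,N\ra$ purely in terms of $w$ and $N(\|\tW\|^2_{\tth})$. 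Everything else is linear algebra inside the splitting $T\tilde{M}=df(TM)\oplus\mathbb{R}N$.
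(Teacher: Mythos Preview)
Your proof is correct and follows essentially the same route as the paper: both arguments rest on identifying $\delta^\alpha_\beta-B^\alpha_\beta$ with the $\tth$-orthogonal projection onto $\mathbb{R}N$ (the paper's equation (\ref{Y})), decomposing $\tW=df(W)+wN$, and then differentiating $w=\la N,\tW\ra$ along $W$ while invoking the Killing equation and the Gauss--Weingarten relations to extract $\II(W,W)$ and $\la\nabla^{\tilde{M}}_{df(W)}\tW,N\ra$. Your ``dictionary'' framing is a bit more systematic than the paper's case-by-case verification, but the content and the order of the key steps are the same.
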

\begin{proof}
First,  for any vector
field $\tilde{Y}=\tilde{Y}^\alpha\pa_\alpha\in T\tilde{M}$ along
$f$, we have \be\la \tilde{Y},N\ra
N=\tilde{Y}_\alpha(\tth^{\alpha\gamma}-B^{\alpha\gamma})\pa_\gamma,\label{Y}\ee
where $\tilde{Y}_\alpha=\tth_{\alpha\beta}\tilde{Y}^\beta$. (See the proof in Page 93 in \cite{Cui1}).

 For the index $1\leq\alpha\leq n+1$, recalling that  $W=W^j{\pa\over\pa x^j}$ and  $W^j=h^{ij}\tilde{W}_\beta
z^\beta_i$, and noticing the notation (\ref{t_eq1}), we compute \be
[\nabla^{\tilde{M}}_{df(W)}\tilde{W}]_\alpha=
[W^jz^\eta_j\nabla_{\pa_\eta}^{\tilde{M}}\tilde{W}]_\alpha=W^jz^\eta_j\tilde{W}_{\alpha|\eta}=
B^{\eta\beta}\tilde{W}_\beta\tilde{W}_{\alpha|\eta},\label{GG}\ee
which implies the first equation of (\ref{imp000}) immediately. Taking $\tilde{Y}=\tilde{W}$ in (\ref{Y}) and noticing $w=\la N,\tW\ra$, we prove the second equation of (\ref{imp000}). 
Taking $\tilde{Y}=\nabla^{\tilde{M}}_{df(W)}\tilde{W}$ in (\ref{Y}), we have
 \be
\la \nabla^{\tilde{M}}_{df(W)}\tilde{W},N\ra N=
[\nabla^{\tilde{M}}_{df(W)}\tilde{W}]_\alpha(\tth^{\alpha\gamma}-B^{\alpha\gamma})\pa_\gamma= 
B^{\eta\beta}\tilde{W}_\beta\tilde{W}_{\alpha|\eta}(\tth^{\alpha\gamma}-B^{\alpha\gamma})\pa_\gamma,\label{GG}\ee
which implies the first equation of  (\ref{impq}) immediately.   Note that the Riemannian mean curvature
$H$ for the hypersurface in the direction of $N$ is defined by
$h^{ij}\tau^\alpha_{ij}{\pa\over\pa\tilde{x}^\alpha}=nHN$. Noticing $w=\la N,\tW\ra$, we prove the second equation of  (\ref{impq}).

Taking $\tilde{Y}=\tilde{W}$ in (\ref{Y}), we get 
$\la \tilde{W},N\ra
N=\tilde{W}_\alpha(\tth^{\alpha\gamma}-B^{\alpha\gamma})\pa_\gamma=\tW-df(W)$, and then
$df(W)=\tW-w N$.  For $X,Y\in TM$, it follows from (\ref{tau_or}) that
\[\tau(X,Y)=\nabla^{\tilde{M}}_{df(X)}df(Y)-df(\nabla^M_XY),\]where  $\nabla^{M}$ is the Levi-Civita connection of
$(M,h)$.  For the hypersurface $M$ and the chosen unit normal field  $N$, we denote $\tau(X,Y)=:B(X,Y)N$. 
Then we compute
\beq \la df(\nabla w),\tW\ra&=&\la\nabla w,W\ra_h=W\la N,\tW\ra=\la \nabla^{\tilde{M}}_{df(W)} N,\tW\ra+\la
N,\nabla^{\tilde{M}}_{df(W)}\tW\ra\nonumber\\
&=&\la \nabla^{\tilde{M}}_{df(W)} N,\tW\ra+\la
N,\nabla^{\tilde{M}}_{\tW}\tW\ra-w\la
N,\nabla^{\tilde{M}}_{N}\tW\ra\nonumber\\
&=&\la \nabla^{\tilde{M}}_{df(W)} N,\tW\ra+\la
N,\nabla^{\tilde{M}}_{\tW}\tW\ra\nonumber\\
&=&\la \nabla^{\tilde{M}}_{df(W)} N,df(W)\ra-\la
\tW,\nabla^{\tilde{M}}_{N}\tW\ra=-B(W,W)-{1\over 2}N(\|\tW\|_{\tth}^2),\nonumber \eeq 
where  we use  $B(W,W)=\la \tau(W,W),N\ra=\la \nabla^{\tilde{M}}_{df(W)} df(W),N\ra$. Recalling that $W=h^{ij}\tilde{W}_\alpha
z^\alpha_i{\pa\over\pa x^j}= A^{j\alpha}\tilde{W}_\alpha{\pa\over\pa
x^j}\in TM$,  we get that (\ref{CCC1}) follows from \beq \tW_\delta\tW_\tau
A^{i\delta}A^{j\tau}\tau^\alpha_{ij}\tX_\alpha&=&W^{i}W^{j}\tau^\alpha_{ij}\tX_\alpha=\la
\tau(W,W),\tX\ra=B(W,W)\la N,\tX\ra\nonumber\\
&=&-\Big[\la df(\nabla w),\tW\ra+{1\over 2}N(\|\tW\|_{\tth}^2)\Big] \la
N,\tX\ra \label{CC1}\nonumber\eeq and (\ref{CCC2}) is proved by taking $\tX=\tW$ in (\ref{CCC1}).
Similarly, since $df(W)=\tW-w N$, (\ref{CCC3}) follows from \[\la \nabla^{\tilde{M}}_{df(W)}\tilde{W},N\ra=\la \nabla^{\tilde{M}}_{\tW}\tilde{W},N\ra-w\la \nabla^{\tilde{M}}_{N}\tilde{W},N\ra=-{1\over 2}N(\|\tW\|_{\tth}^2)\]
and
\[\la \nabla^{\tilde{M}}_{df(W)}\tilde{W},\tW\ra=\la \nabla^{\tilde{M}}_{\tW}\tilde{W},\tW\ra-w\la \nabla^{\tilde{M}}_{N}\tilde{W},\tW\ra=-{1\over 2}N(\|\tW\|_{\tth}^2)w.\]
\end{proof}

The formula (\ref{killingmean}) is very complicated, but luckily,  by using Lemma \ref{MMM}, several terms in (\ref{killingmean}) can be annihilated and the final form seems quite simple. We believe that this formula will play an important role in the future study of cmc surfaces in Randers space forms.

\begin{thm}{\label{geomean}}Let  $f:M^n\rightarrow (\tilde{M}^{n+1},\tilde{F})$ be a hypersurface
isometrically immersed in a Randers manifold. Suppose the navigation
data of $(\tilde{M},\tilde{F})$ is $(\tth,\tW)$, where
$\tilde{W}$ is a Killing vector field. Then for any $\tilde{X}\in T\tilde{M}$ along $f$,
 the
BH-mean curvature (resp. the HT-mean curvature) of $f$ is given by \[
\mathcal {H}_f(\tilde{X})=-{1\over \varrho}\Big\{\Big[nH+N(\mathrm{log}\chi)\Big]\Phi(\ms)-2\la
df(\nabla w), \tW\ra_{\tth}\Phi'(\ms)\Big\}\la
N,\tilde{X}\ra_{\tth},\] where $N$ is a unit normal field of
$f:(M,h)\rightarrow (\tilde{M},\tth)$, $H$ is the Riemannian mean
curvature with respect to $N$, $\nabla w$ is the gradient of $w$
with respect to $h$, $w:=\la N,\tW\ra_{\tth}$, $\ms=1-w^2$,  $\chi$ is given by (\ref{volume})  and
$\Phi(\ms)$ is given by (\ref{Phi}), respectively.
\end{thm}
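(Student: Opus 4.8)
The plan is to start from the Killing-field hypersurface formula (\ref{killingmean}) of Theorem~\ref{Killingmean2} (in the case $p=1$), contract it against $\tilde{X}^\gamma$ to form the scalar $\mathcal{H}_f(\tilde{X})=\mathcal{H}_\gamma\tilde{X}^\gamma$, and then feed in the identities of Lemma~\ref{MMM} one at a time, collecting terms. Two preliminary observations drive everything. First, applying (\ref{Y}) to $\tilde{Y}=\tilde{X}$ gives $(\delta^\eta_\gamma-B^\eta_\gamma)\tilde{X}^\gamma=\la N,\tilde{X}\ra N^\eta$, so every factor $(\delta^\eta_\gamma-B^\eta_\gamma)\tilde{X}^\gamma$ in (\ref{killingmean}) becomes a normal projection, and in particular $(\delta^\eta_\gamma-B^\eta_\gamma)\tilde{X}^\gamma\,\tW_{\alpha|\eta}\tW^\alpha=\la N,\tilde{X}\ra\la\nabla^{\tilde{M}}_N\tW,\tW\ra=\tfrac12\la N,\tilde{X}\ra\,N(\|\tW\|_{\tth}^2)$. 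Second, $\ms=1-\|\tW\|_{\tth}^2+\|W\|_h^2=1-w^2$ because $\tW=df(W)+wN$ (as in the proof of Lemma~\ref{MMM}), hence $1-\ms=w^2$, and $N(\mathrm{log}\chi)=(\mathrm{log}\chi)'\,N(\|\tW\|_{\tth}^2)$ by the chain rule and the stated meaning of $(\mathrm{log}\chi)'$.

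I would then process (\ref{killingmean}) line by line, writing $\nu:=N(\|\tW\|_{\tth}^2)$ and $L:=\la df(\nabla w),\tW\ra$ for brevity. The first line, using (\ref{imp000}) together with the second identity of (\ref{CCC3}) and the projection observation, collapses to $-\varrho^{-1}N(\mathrm{log}\chi)\,(2\varrho'w^2+\varrho)\la N,\tilde{X}\ra$, in which $2\varrho'w^2+\varrho=\Phi(\ms)$ by (\ref{Phi}) since $1-\ms=w^2$. The second line, using the first identities of (\ref{impq}) and (\ref{CCC3}), the second identity of (\ref{imp000}), and the projection observation, collapses to $-\varrho^{-1}\nu\,(2\varrho''w^2-\varrho')\la N,\tilde{X}\ra$. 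The third line, using the second identity of (\ref{impq}), the defining relation $h^{ij}\tau^\alpha_{ij}\pa_\alpha=nHN$, and the identities (\ref{CCC1}) and (\ref{CCC2}), collapses after routine algebra to $-\varrho^{-1}\big\{nH(2\varrho'w^2+\varrho)+2L(\varrho'-2\varrho''w^2)+\nu(\varrho'-2\varrho''w^2)\big\}\la N,\tilde{X}\ra$.

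The decisive step is then the observation that the $\nu$-terms of the second and third lines cancel outright, since $-\varrho^{-1}\nu(2\varrho''w^2-\varrho')-\varrho^{-1}\nu(\varrho'-2\varrho''w^2)=0$; this is the ``surprising'' simplification announced before the theorem. What survives from lines two and three is $-\varrho^{-1}\big\{nH(2\varrho'w^2+\varrho)+2L(\varrho'-2\varrho''w^2)\big\}\la N,\tilde{X}\ra$, and differentiating $\Phi(\ms)=2\varrho'(\ms)(1-\ms)+\varrho(\ms)$ gives $\Phi'(\ms)=2\varrho''(1-\ms)-\varrho'=2\varrho''w^2-\varrho'$, so $\varrho'-2\varrho''w^2=-\Phi'(\ms)$. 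Adding back the first line produces $\mathcal{H}_f(\tilde{X})=-\varrho^{-1}\big\{[nH+N(\mathrm{log}\chi)]\Phi(\ms)-2\la df(\nabla w),\tW\ra\Phi'(\ms)\big\}\la N,\tilde{X}\ra$, which is the asserted identity. The work is entirely in the bookkeeping — matching each of the many index contractions in (\ref{killingmean}) to the right identity of Lemma~\ref{MMM} and verifying the $\nu$-cancellation — and I expect this, rather than any conceptual point, to be the only real obstacle.
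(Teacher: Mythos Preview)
Your proposal is correct and follows essentially the same route as the paper: contract (\ref{killingmean}) against $\tilde{X}^\gamma$, replace every contraction by the corresponding identity of Lemma~\ref{MMM} (together with the normal-projection relation $(\delta^\eta_\gamma-B^\eta_\gamma)\tilde{X}^\gamma=\la N,\tilde{X}\ra N^\eta$), observe that the $\nu=N(\|\tW\|_{\tth}^2)$ contributions from the second and third blocks cancel, and then recognise $2\varrho'w^2+\varrho=\Phi(\ms)$ and $2\varrho''w^2-\varrho'=\Phi'(\ms)$. The paper's proof performs exactly these steps, displaying the intermediate $\la\nabla^{\tilde{M}}_{df(W)}\tW,N\ra$ and $\la\nabla^{\tilde{M}}_{df(W)}\tW,\tW\ra$ forms explicitly before invoking (\ref{CCC3}), but the logic and the key cancellation are identical.
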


Proof: By (\ref{ms}) and (\ref{Y}), we get
\[\ms=1-(\tth^{\alpha\beta}-B^{\alpha\beta})\tW_\alpha\tW_\beta
=1-\la N,\tW\ra^2=1-w^2.\]  Then by using Theorem \ref{Killingmean2} and Lemma \ref{MMM},  we compute 

\beq \mathcal
{H}_f(\tX)&=&\mathcal
{H}_\gamma \tX^\gamma\nonumber\\
&=&{2(\mathrm{log}\chi)'\over
\varrho}\Big[2\varrho'
B^{\eta\delta}\tilde{W}_\delta\tilde{W}_\tau(\delta^\tau_\gamma-B^\tau_\gamma)-\varrho(\delta^\eta_\gamma-B^\eta_\gamma)\Big]
\tilde{W}_{\alpha|\eta}\tilde{W}^\alpha\tX^\gamma\nonumber\\
&&-{1\over \varrho}\Big\{4\varrho'B^{\eta\beta}
\tilde{W}_{\alpha|\eta}\tilde{W}_\beta(\delta^\alpha_\gamma-B^\alpha_\gamma)-2\Big[2\varrho''
B^{\eta\delta}\tilde{W}_\delta\tilde{W}_\tau(\delta^\tau_\gamma-B^\tau_\gamma)-\varrho'(\delta^\eta_\gamma-B^\eta_\gamma)\Big]
\tilde{W}_{\alpha|\eta}\tilde{W}^\alpha\Big\}\tX^\gamma\nonumber\\
&&-{1\over \varrho}\Big\{h^{ij}\Big[2\varrho'\tilde{W}_\alpha\tilde{W}_\beta(\delta^\beta_\gamma-B^\beta_\gamma)+\varrho\tilde{h}_{\gamma\alpha}\Big]\nonumber\\
&&+2\tilde{W}_\delta\tilde{W}_\tau
A^{i\delta}A^{j\tau}\Big[2\varrho''\tilde{W}_\alpha\tilde{W}_\beta(\delta^\beta_\gamma-B^\beta_\gamma)-\varrho'
\tilde{h}_{\gamma\alpha}\Big]\Big\}\tau^\alpha_{ij}\tX^\gamma\nonumber\\
 &=&{2(\mathrm{log}\chi)'\over
\varrho}\Big[2\varrho'
\la \nabla^{\tilde{M}}_{df(W)}\tilde{W},\tW\ra w-{1\over 2}\varrho N(\|\tW\|_{\tth}^2)\Big]\la N,\tX\ra\nonumber\\
&&-{1\over \varrho}\Big\{4\varrho'\la \nabla^{\tilde{M}}_{df(W)}\tilde{W},N\ra -2\Big[2\varrho''\la \nabla^{\tilde{M}}_{df(W)}\tilde{W},\tW\ra w-{1\over 2}\varrho'N(\|\tW\|_{\tth}^2)\Big]\Big\} \la N,\tX\ra
\nonumber\\
&&-{1\over \varrho}\Big\{nH\Big[2\varrho'w^2+\varrho\Big]-\Big(2\la df(\nabla w), \tW\ra+N(\|\tW\|_{\tth}^2)\Big)\Big[2\varrho''w^2-\varrho'\Big]\Big\} \la N,\tX\ra\nonumber\\
&=&-{(\mathrm{log}\chi)'\over
\varrho}\Big[2\varrho'N(\|\tW\|_{\tth}^2)w^2+\varrho N(\|\tW\|_{\tth}^2)\Big]
 \la N,\tX\ra \nonumber\\
&&-{1\over \varrho}\Big\{-2\varrho'N(\|\tW\|_{\tth}^2) +\Big[2\varrho''N(\|\tW\|_{\tth}^2)w^2+\varrho'N(\|\tW\|_{\tth}^2)\Big]\Big\} \la N,\tX\ra\nonumber\\
&&-{1\over \varrho}\Big\{nH\Big[2\varrho'w^2+\varrho\Big]-\Big(2\la df(\nabla w), \tW\ra+N(\|\tW\|_{\tth}^2)\Big)\Big[2\varrho''w^2-\varrho'\Big]\Big\} \la N,\tX\ra\nonumber\\
&=&-{(\mathrm{log}\chi)'\over
\varrho}\Big[2\varrho'(1-\ms)+\varrho\Big]N(\|\tW\|_{\tth}^2)
 \la N,\tX\ra \nonumber\\
 &&-{1\over \varrho}\Big\{nH\Big[2\varrho'(1-\ms)+\varrho\Big]-2\la df(\nabla w), \tW\ra\Big[2\varrho''(1-\ms)-\varrho'\Big]\Big\}\la N,\tX\ra\nonumber\\
&=&-{(\mathrm{log}\chi)'\over
\varrho}\Phi(\ms)N(\|\tW\|_{\tth}^2)
 \la N,\tX\ra-{1\over \varrho}\Big\{nH\Phi(\ms)-2\la df(\nabla w),
\tW\ra\Phi'(\ms)\Big\}\la N,\tX\ra.\nonumber\eeq  The proof is
completed.\qed

Observing that $\chi=1$ in the BH-case,  we immediately get the following simpler formula, which enables us to study the BH-minimal surface in the hyperbolic Randers space in the next section.

\begin{cor}\label{Cor111} Let  $f:M^n\rightarrow (\tilde{M}^{n+1},\tilde{F})$ be a hypersurface
isometrically immersed in a Randers manifold. Suppose the navigation
data of $(\tilde{M},\tilde{F})$ is $(\tth,\tW)$, where
$\tilde{W}$ is a Killing vector field. Then for any $\tilde{X}\in T\tilde{M}$ along $f$,
 the
BH-mean curvature  of $f$ is given by \[
\mathcal {H}_f(\tilde{X})=-{1\over \varrho}\Big\{nH\Phi(\ms)-2\la
df(\nabla w), \tW\ra_{\tth}\Phi'(\ms)\Big\}\la
N,\tilde{X}\ra_{\tth},\] where the notations are the same as those in Theorem \ref{geomean}  and
$\Phi(\ms)$ is given by (\ref{Phi}) in  the BH-case.
\end{cor}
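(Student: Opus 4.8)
The plan is to obtain Corollary~\ref{Cor111} as an immediate specialization of Theorem~\ref{geomean} to the Busemann--Hausdorff measure, so almost no new work is needed. Recall from (\ref{volume}) that in the BH-case the defining pair is $(\varrho,\chi)=(\ms^{-n/2},1)$; in particular $\chi$ is the constant function $1$ when it is regarded, as in Theorem~\ref{Killingmean}, as a function of $\|\tW\|^2_{\tth}$. Consequently $(\log\chi)'=0$, and therefore $N(\log\chi)=(\log\chi)'\,N(\|\tW\|^2_{\tth})=0$ at every point along $f$.

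First I would substitute $N(\log\chi)=0$ into the unified formula of Theorem~\ref{geomean},
\[\mathcal{H}_f(\tX)=-{1\over\varrho}\Big\{\big[nH+N(\log\chi)\big]\Phi(\ms)-2\la df(\nabla w),\tW\ra_{\tth}\Phi'(\ms)\Big\}\la N,\tX\ra_{\tth},\]
which kills the curvature-independent $N(\log\chi)$ contribution and leaves exactly
\[\mathcal{H}_f(\tX)=-{1\over\varrho}\Big\{nH\,\Phi(\ms)-2\la df(\nabla w),\tW\ra_{\tth}\Phi'(\ms)\Big\}\la N,\tX\ra_{\tth}.\]
Here $\varrho=\ms^{-n/2}$ and $\Phi(\ms)=\ms^{-n/2}(-n\ms^{-1}+n+1)$ by (\ref{Phi}) in the BH-case, while the remaining data --- $w=\la N,\tW\ra_{\tth}$, $\ms=1-w^2$, and $\nabla w$ the gradient of $w$ with respect to $h$ --- carry over verbatim from Theorem~\ref{geomean}.

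I do not expect a genuine obstacle: all the analysis was already carried out in Theorems~\ref{Killingmean} and~\ref{Killingmean2} and in Lemma~\ref{MMM} on the way to Theorem~\ref{geomean}. The one point to be careful about is the bookkeeping --- checking that $\chi\equiv1$ really does annihilate \emph{both} surviving occurrences of $\chi$, namely the explicit $(\log\chi)'$ prefactor and the $N(\log\chi)$ term inside the braces --- so that passing to the BH normalization changes nothing else in the formula.
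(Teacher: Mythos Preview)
Your argument is correct and is exactly the paper's own: after Theorem~\ref{geomean} the authors simply note that $\chi=1$ in the BH-case, whence $N(\log\chi)=0$, and the corollary follows immediately. The only minor remark is that in the final formula of Theorem~\ref{geomean} there is just the single occurrence $N(\log\chi)$ to kill (the $(\log\chi)'$ prefactor from Theorems~\ref{Killingmean}--\ref{Killingmean2} has already been absorbed), so your ``both surviving occurrences'' bookkeeping is more caution than is actually needed.
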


\begin{rem} If $\tW$ is a Killing field of constant length, then  $\chi$ is a constant both in the BH-case and the HT-case, and the formula in Theorem \ref{geomean}  reduces to the same one as in Corollary \ref{Cor111}, which is Theorem 3.3 in \cite{Cui1}. This enables us to get both nontrivial BH-minimal surfaces and HT-minimal surfaces in the Bao-Shen sphere in \cite{Cui2}. Note that when $\tW$ is Killing only, the formula in Corollary \ref{Cor111} is applicable only for the BH-case.
\end{rem}

\begin{rem} Although Theorem \ref{geomean} only states for the BH-volume form and HT-volume form,  it actually holds for the hypersurface endowed with any volume form  in the form   $dV=\mathcal {F}dx$,  $\mathcal {F}={\varrho\over\chi}\sqrt{det(h_{ij})}$,  where  $\varrho=\varrho(\ms)$ and $\chi=\chi(\|\tW\|_{\tth}^2)$ are arbitrary smooth functions (not only (\ref{volume})).
\end{rem}

\section{ Rotational BH-minimal surfaces of spherical and hyperbolic types}

 Let $(p^1,p^2,p^3,p^4)$ be the
coordinates of $L^4$ with the Lorentzian metric
$g_{L^4}=-(d{p^1})^2+(d{p^2})^2+(d{p^3})^2+(d{p^4})^2,$
and let $H^3\subset L^4$ be the model of upper paraboloid
$-({p^1})^2+({p^2})^2+({p^3})^2+({p^4})^2=-1$ with $p^1>0$. It is well known that the induced metric $\tth$ on $H^3$ has constant sectional curvature $\BK_{\tth}=-1$. For fixed numbers $\e_1\neq 0$ and $\e_2\neq 0$, we consider the Killing  field $\tW=(\e_1p^2,\e_1p^1,-\e_2p^4,\e_2p^3)$ in $H^3$ which is not of constant length and we call it a Killing field of mixed $S$ and $J$ type by using the terminology in \cite{BRS}.
 Note that if $\e_1=\e_2\neq 0$, then $\tW$ is the scaling of the distinguished Killing field $(p^2,p^1,-p^4,p^3)$ in $H^3$.  In order to guarantee $\|\tW\|_{\tth}<1$, we consider the domain
 \be \Omega=H^3\cap\{(p^1,p^2,p^3,p^4)\in L^4; -\e_1^2(p^2)^2+\e_1^2(p^1)^2+\e_2^2(p^4)^2+\e_2^2(p^3)^2<1\}.\label{Omega}\ee
 In this section,  we shall use Corollary \ref{Cor111} to obtain the nontrivial rotational BH-minimal surfaces in the domain $\Omega\subset H^3$ with a Randers metric $\tF$ whose navigation data is $(\tth,\tW)$. By \cite{BRS}, $(\Omega,\tF)$ is a regular Randers space of constant flag curvature $\BK_{\tF}=-1$.

The {\em rotational surface of spherical type} is parametrized by 
 $X: R^2\rightarrow H^3\subset L^4$, \be
X(t,\theta)=(x(t),y(t),z(t)\cos\theta,z(t)\sin\theta),\label{surface1}\ee
where 
\[ x(t):=\sqrt{1+x_1(t)^2}\cosh\phi(t),\ \ \
y(t):=\sqrt{1+x_1(t)^2}\sinh\phi(t),\ \ \  z(t):=x_1(t), \label{phi2}\]
$x_1(t)\neq 0$, and the {\em rotational surface of hyperbolic type} is parametrized by 
 $X: R^2\rightarrow H^3\subset L^4$, \be
X(t,\theta)=(x(t)\cosh\theta,x(t)\sinh\theta,y(t),z(t)),\label{surface2}\ee
where 
\[ x(t):=x_1(t),\ \ \
y(t):=\sqrt{x_1(t)^2-1}\cos\phi(t),\ \ \  z(t):=\sqrt{x_1(t)^2-1}\sin\phi(t), \label{phi2}\]
 $x_1(t)>1$. In both cases, 
$\phi(t):=\int^t_0{\sqrt{\delta+x_1^2-{x}_1'^2}/(\delta+x_1^2)}d\sigma$ where $\delta=1$ for the spherical case and $\delta=-1$ for the hyperbolic case. 
The rotational surface of spherical type is obtained  by rotating the curve $\gamma(t)=(x(t),y(t),z(t),0), t\in R$, in $H^3$ around the
$p^1p^2$-plane,  and the rotational surface of hyperbolic type is obtained  by rotating the curve $\gamma(t)=(x(t),0,y(t),z(t)), t\in R$, in $H^3$ around the
$p^3p^4$-plane, respectively.
One
can check that for both types the parameter $t$ is the parametrization of  arc
length with respect to the standard hyperbolic metric, i.e.,
\[-x'(t)^2+y'(t)^2+z'(t)^2=1.\]

\begin{rem} \label{Special}If $\delta+x_1^2-{x}_1'^2=0$, then the nontrivial solutions are $x_1(t)=\pm \sinh(t+c)$ for $\delta=1$ and $x_1(t)=\pm \cosh(t+c)$ for $\delta=-1$, where $c$ is an arbitrary constant. Then, up to a change of parameters,  (\ref{surface1}) and (\ref{surface2})  reduce to
\be X(t,\theta)=(\cosh t,0,\sinh t \cos\theta,\sinh t \sin\theta)\label{Spe1}\ee and \be X(t,\theta)=(\cosh t\cosh\theta,\cosh t\sinh\theta,  \sinh t,0),\label{Spe2}\ee respectively. They are totally geodesic and so $H=0$, and it follows from the equation (\ref{SecondT}) below that $\la df(\nabla
w), \tW\ra=0$. By  Corollary \ref{Cor111},  they are BH-minimal in $(\Omega,\tF)$.
\end{rem}

 In the following, we shall study the rotational surfaces of  spherical and hyperbolic types simultaneously. We shall consider the minimal surfaces which are not the cases in Remark \ref{Special} (i.e. $\delta+x_1^2-{x}_1'^2\neq 0$).  A unit normal
vector field of $X$ in $H^3$ is, for the spherical case, \be N=\Big(zy'-yz',\ x'z-xz',\
(xy'-x'y)\cos\theta,\ (xy'-x'y)\sin\theta\Big)\label{normal1}\ee
and for the hyperbolic case, \be N=\Big((yz'-zy')\cosh\theta,\ (yz'-zy')\sinh\theta,\ xz'-x'z,\
x'y-xy'\Big),\label{normal2}\ee which can be checked directly by computing that
$\la X_t,N\ra=\la X_\theta,N\ra=\la X,N\ra=0$  and $\la N,N\ra=1$, where the notations $X_t$ and
$X_\theta$ denote the derivatives with respect to $t$ and $\theta$,
and $\la, \ra$ denotes the Lorenz metric of $L^4$. A straightforward
computation shows that the metric matrix $(h_{ij})=(\la \pa_{x^i} X,\pa_{x^j} X\ra)$ of (\ref{surface1}) and (\ref{surface2}) have the uniform form  \begin{equation}\Big[
\begin{array}{cc}
 h_{11} & h_{12}\\
h_{21} & h_{22}
\end{array}\Big]=\Big[
\begin{array}{cc}
 1& 0\\
0 & x_1^2
\end{array}\Big],\nonumber
\end{equation}
where  $x^1:=t$, $x^2:=\theta$.
  The principal
curvatures  are given by $-\lambda_1$ and $-\lambda_2$ with
\[\lambda_1={N_t\over X_t}={x_1-{x}_1''\over\sqrt{\delta+x_1^2-{x}_1'^2}},\ \ \ 
\lambda_2={N_\theta\over X_\theta}={\sqrt{\delta+x_1^2-{x}_1'^2}\over x_1},\]
and therefore the mean curvature of the Riemannian immersion $X: R^2\rightarrow H^3\subset L^4$, corresponding to this normal field $N$,
is given by
\[H=-{\lambda_1+\lambda_2\over 2}={{x_1{x}_1''+{x}_1'^2-2x_1^2-\delta}\over2x_1\sqrt{\delta+x_1^2-{x}_1'^2}},\]
 where $\delta=1$ for the spherical case and $\delta=-1$ for the hyperbolic case. 

The Killing field $\tW$, restricted to the surface (\ref{surface1}) and (\ref{surface2}), is respectively given by 
\[\tW=(\e_1y(t), \e_1x(t), -\e_2z(t)\sin\theta, \e_2z(t)\cos\theta)\]
and  \[ \tW=(\e_1x(t)\sinh\theta, \e_1x(t)\cosh\theta,  -\e_2z(t), \e_2y(t)).\] 
It follows from (\ref{normal1}) and (\ref{normal2}) that, for the spherical type,
\[ w=\la N,\tW \ra=\e_1[-y(y'z-yz')+x(x'z-xz')]=\e_1[(-x^2+y^2)z'+z(xx'-yy')]=-\e_1{x}_1',\]
and for the hyperbolic type,
\[w=\la N,\tW \ra=\e_2[-z(xz'-x'z)+y(x'y-xy')]=\e_2[(y^2+z^2)x'-x(yy'+zz')]=-\e_2{x}_1',\]
which can be written in the uniform form $w=-\e_k{x}_1', k=1,2$, where we are using the identity $-xx'+yy'+zz'=0$, which is derived from $-x^2+y^2+z^2=1$ by taking the derivative with respect to $t$. 

Similarly, by separate computations, we write the following term for the spherical and hyperbolic surfaces in a uniform form \beq\la df(\nabla
w), \tW\ra&=&h^{ij}{\pa w\over \pa
x^j}{\pa
X^A\over\pa x^i}\tW^A=h^{ij}{\pa w\over \pa x^j}\la {\pa
X\over\pa x^i},\tW\ra\nonumber\\
&=&h^{11}w'\la
 {\pa
X\over\pa
t},\tW\ra=-\e_k^2{x}_1''\sqrt{\delta+x_1^2-{x}_1'^2},\label{SecondT}\eeq
where $k=1,2$,  $1\leq i,j\leq2$, $1\leq A\leq4$.
It follows from  Corollary \ref{Cor111} that $X$ is BH-minimal
if and only if
\be{{x_1{x}_1''+{x}_1'^2-2x_1^2-\delta}\over{\delta+x_1^2-{x}_1'^2}}\Phi(\ms)+2\e_k^2x_1{x}_1''\Phi'(\ms)=0,\label{eq}\ee
where $\ms=1-w^2=1-\e_k^2x_1'^2$ and $\Phi(\ms)=(3\ms-2)/\ms^2$.
If $\Phi(\ms)\neq0$, then it is surprising that the first integral of
(\ref{eq}) can be explicitly given by \be
x_1\sqrt{\delta+x_1^2-{x}_1'^2}\Phi(\ms)=E,\label{fir1}\ee where  $E\neq 0$ is a
constant, called the {\em energy} of the minimal surfaces. 

Now we analyze the BH-minimal equation (\ref{eq}).
If $\Phi(\ms)=0$, i.e., $\ms=2/3$, we have $x_1=\pm{1\over \sqrt{3}\e_k}t+c$ where $c$ is a constant. It is obvious that $x_1$ satisfies (\ref{eq}) and then (\ref{surface1}) and  (\ref{surface2}) are really BH-minimal surfaces in the domain $(\Omega,\tF)$.

Next, we consider $\Phi(\ms)\neq0$.    We first claim that $x_1'$ is not a constant  in any neighborhood of $t$. Otherwise, if $x_1'$ is a constant  in a neighborhood of $t$, then $x_1=c_1t+c_2$ with $c_1$ and $c_2$ two constants. Plugging into  (\ref{eq}) yields 
$c_1=0$ and $2c_2^2+\delta=0$, and this implies  $2x_1^2+\delta=0$, which is impossible for (\ref{surface1}) ($\delta=1$) and  (\ref{surface2}) ($\delta=-1$, $x_1^2>1$), contradiction.

 Let $s:={x}_1'^2$. Since $s$ is not a constant, it can be used as a local parameter. 
 Noticing that  $\Phi(\ms)\neq0$ and $\ms=1-\epsilon_k^2
{x}_1'^2=1-\epsilon_k^2 s$, from (\ref{fir1}), we have\be x_1
=\pm{1\over
\sqrt{2}}\Big[-(\delta-s)+\sqrt{(\delta-s)^2+4E^2{(1-\e_k^2s)^4\over (1-3\e_k^2s)^2}}\Big]^{1\over
2}\label{newx212}.\ee

Locally,
\beq\phi&=&\int{d\phi}=\int{d\phi(t)\over dt}{dt\over
dx_1}{dx_1\over ds}ds=\int{d\phi\over dt}{1\over{dx_1\over
dt}}{dx_1\over
ds}ds\nonumber\\
&=&\pm\int{\sqrt{\delta+x_1^2-{x}_1'^2}\over\delta+x_1^2}{1\over
\sqrt{s}}{dx_1\over ds}ds\nonumber\\
&=&\pm{E\over2}\int{1\over x_1^2(\delta+x_1^2)\Phi(\ms)}{1\over
\sqrt{s}}{dx_1^2\over ds}ds\nonumber\\
&=&\pm{E\over2\delta}\int{(1-\e_k^2s)^2\over \sqrt{s}(1-3\e_k^2s)}{d\over
ds}\Big(log{x_1^2\over\delta+x_1^2}\Big)ds,\label{newphi}\eeq where   we are using the equation (\ref{fir1}), $k=1, \delta=1$ for the spherical type and  $k=2, \delta=-1$ for the hyperbolic type.
Therefore, the  BH-minimal surfaces  in the
form (\ref{surface1}) and  (\ref{surface2})  can be respectively reparametrized by
\be
X(s,\theta)=(\sqrt{1+x_1^2}\cosh\phi,\sqrt{1+x_1^2}\sinh\phi,x_1\cos\theta,x_1\sin\theta)\label{surface11}\ee
and
\be
X(s,\theta)=(x_1\cosh\theta,x_1\sinh\theta,\sqrt{x_1^2-1}\cos\phi,\sqrt{x_1^2-1}\sin\phi),\label{surface22}\ee
 where
 $x_1=x_1(s)$ and $\phi=\phi(s)$ are given by (\ref{newx212}) and
 (\ref{newphi}), respectively. Note that in (\ref{newx212}), $s\neq {1/(3\e_k^2)}$. So,
  the parameters are $s\in (0,1/(3\e_k^2))\cup (1/(3\e_k^2),1/\e_k^2)$ and $\theta\in
 S^1$, where $k=1,\delta=1$ for the spherical type and $k=2,\delta=-1$ for the hyperbolic type.

\begin{thm}\label{BHM}  Let $X$ be the rotational surface of spherical type (\ref{surface1}) (resp. the hyperbolic type (\ref{surface2})) in the Randers domain  $\Omega\subset H^3$ defined by (\ref{Omega}) with constant flag curvature $\BK=-1$.  
If $X$ is  BH-minimal in  $(\Omega,\tF)$, then it must be  one of the following surfaces with intersection of $\Omega$:

(a) $X$ is the totally geodesic surface (\ref{Spe1}) (resp. (\ref{Spe2})) in $H^3$, up to a change of parameters;

(b) $X$ is given by  (\ref{surface1}) (resp. (\ref{surface2})) with $x_1=\pm{1\over \sqrt{3}\e_k}t+c$, where $c$ is a constant, $k=1$ (resp. $k=2$);

(c) $X$  can be locally parametrized as  (\ref{surface11}) (resp. (\ref{surface22})) with any energy $E\neq 0$, where  $\delta=1$ (resp. $\delta=-1$),  the parameters are  $s\in (0,1/(3\e_k^2))\cup (1/(3\e_k^2),1/\e_k^2)$, $k=1$ (resp. $k=2$) and $\theta\in
 S^1$.

\end{thm}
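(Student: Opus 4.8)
The plan is to carry out a complete case analysis of the BH-minimal equation \eqref{eq}, which by Corollary \ref{Cor111} is both necessary and sufficient for $X$ to be BH-minimal (since $\mathcal{H}_f(\tilde{X})$ vanishes identically iff its single scalar coefficient does). The groundwork has already been laid in the preceding discussion: we know $H$, $w=-\e_k x_1'$, $\langle df(\nabla w),\tW\rangle$, and hence the explicit form \eqref{eq} with $\ms=1-\e_k^2 x_1'^2$ and $\Phi(\ms)=(3\ms-2)/\ms^2$. The remaining task is purely to enumerate all solutions $x_1(t)$ of \eqref{eq} and match each to one of the three families (a), (b), (c).

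First I would dispose of the boundary case excluded earlier, namely $\delta+x_1^2-x_1'^2=0$: Remark \ref{Special} already shows these give the totally geodesic surfaces \eqref{Spe1}, \eqref{Spe2}, establishing case (a). Next, assuming $\delta+x_1^2-x_1'^2\neq 0$, I would split on whether $\Phi(\ms)=0$. If $\Phi(\ms)=0$, i.e. $\ms=2/3$, then $\e_k^2 x_1'^2=1/3$ is constant, so $x_1=\pm\tfrac{1}{\sqrt3\,\e_k}t+c$; one checks directly (the $\Phi'$ term in \eqref{eq} carries the factor $x_1''=0$, and $\Phi(\ms)=0$ kills the first term) that \eqref{eq} holds, giving case (b). The bulk of the argument is the remaining case $\Phi(\ms)\neq 0$: here I would first rule out $x_1'$ being locally constant (the short argument already sketched: $x_1=c_1 t+c_2$ forces $c_1=0$ and $2c_2^2+\delta=0$, impossible since $x_1^2>0$ when $\delta=1$ and $x_1^2>1$ when $\delta=-1$). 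Hence $s:=x_1'^2$ is a genuine local parameter. Then I would verify that \eqref{fir1}, $x_1\sqrt{\delta+x_1^2-x_1'^2}\,\Phi(\ms)=E$, is a first integral of \eqref{eq}: differentiate \eqref{fir1} with respect to $t$, use $\ms'=-2\e_k^2 x_1'x_1''$ and $\tfrac{d}{dt}(\delta+x_1^2-x_1'^2)=2x_1'(x_1-x_1'')$... wait, that is $2x_1 x_1' - 2x_1'x_1'' = 2x_1'(x_1-x_1'')$, and after dividing by the common factor $x_1'$ one recovers \eqref{eq} up to nonvanishing factors. Solving \eqref{fir1} algebraically for $x_1^2$ as a function of $s$ yields \eqref{newx212}, and then the arc-length relation together with the definition of $\phi(t)$ gives the quadrature \eqref{newphi} after the change of variable $t\mapsto s$; this produces the reparametrizations \eqref{surface11}, \eqref{surface22}, i.e. case (c). Finally I would note the parameter range $s\in(0,1/(3\e_k^2))\cup(1/(3\e_k^2),1/\e_k^2)$ forced by regularity ($s>0$ since $x_1'$ is nonconstant and we are away from critical points, $s<1/\e_k^2$ to keep $\|\tW\|_\tth<1$ in $\Omega$, and $s\neq 1/(3\e_k^2)$ from \eqref{newx212}).

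The main obstacle I anticipate is twofold. First, checking that \eqref{fir1} really is a first integral of \eqref{eq} — while conceptually routine, it requires correctly handling the derivative of $\Phi(\ms)=(3\ms-2)/\ms^2$ and confirming that all the non-$x_1''$ terms reorganize into exactly the coefficient $(x_1 x_1'' + x_1'^2 - 2x_1^2-\delta)/(\delta+x_1^2-x_1'^2)\cdot\Phi + 2\e_k^2 x_1 x_1''\,\Phi'$; the algebra is delicate because two different rational expressions in $\ms$ must cancel. Second, the completeness of the case analysis: one must be sure that in the case $\Phi(\ms)\neq 0$ every solution is captured by \eqref{fir1} for some constant $E\neq 0$ (and that $E=0$ is genuinely excluded, which follows because $E=0$ in \eqref{fir1} would force $x_1=0$, contradicting $x_1\neq 0$ resp. $x_1>1$). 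Everything else — the explicit solution \eqref{newx212}, the integral \eqref{newphi}, the parameter domains — is bookkeeping on top of these two points.
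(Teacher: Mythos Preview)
Your proposal is correct and follows essentially the same approach as the paper: the paper's proof is exactly the case analysis you outline (boundary case via Remark \ref{Special}, then $\Phi(\ms)=0$ giving case (b), then $\Phi(\ms)\neq 0$ with the first integral \eqref{fir1} and the change of variable $s=x_1'^2$ leading to \eqref{newx212}, \eqref{newphi}, and hence case (c)). If anything you are slightly more explicit than the paper, which simply asserts that \eqref{fir1} is a first integral of \eqref{eq} without writing out the verification you sketch.
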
 

\begin{rem}  If $\e_1\neq0$ and $\e_2=0$, then $\tW=\e_1(\tx^2,\tx^1,0,0)$, one  can also get the same rotational surface of  spherical type in Theorem \ref{BHM}. Similarly,  if $\e_1=0$ and $\e_2\neq 0$, then $\tW=\e_2(0,0,-\tx^4,\tx^3)$, one  gets the same rotational surface of hyperbolic type in Theorem \ref{BHM}. 
\end{rem}

\end{document}